\newtheorem{lema}{Lemma}[section]
\newtheorem{teo}[lema]{Theorem}
\newtheorem{cor}[lema]{Corollary}
\newtheorem{prop}[lema]{Proposition}
\theoremstyle{definition}
\numberwithin{equation}{section}
\newtheorem*{theoremA}{Theorem A}
\newtheorem*{theoremB}{Theorem B}
\newtheorem*{theoremC}{Theorem C}
\newtheorem*{corA}{Corollary A}
\newcommand{\cf}{\emph{cf.} }
\newcommand{\tq}{\,|\,}
\newcommand{\ie}{\emph{i.e.}}
\newcommand{\ed}{\ar@{-}}
\renewcommand{\epsilon}{\varepsilon}
\numberwithin{equation}{section}
\newcommand{\BN}{\mathbb{N}}
\newcommand{\BQ}{\mathbb{Q}}
\newcommand{\BC}{\mathbb{C}}
\newcommand{\N}{\mathbb{N}}
\newcommand{\uni}{\mathcal U}
\newcommand{\centro}{\mathcal Z}
\newcommand{\ds}{\displaystyle}
\newcommand{\inv}{^{-1}}
\newcommand{\dem}{\begin{proof}}
\newcommand{\cqd}{\end{proof}}
\newcommand{\E}{\varepsilon}
\newcommand{\che}{\left\{}
\newcommand{\chd}{\right\}}
\newcommand{\coe}{\left[}
\newcommand{\cod}{\right]}
\newcommand{\pae}{\left(}
\newcommand{\pad}{\right)}	
\newcommand{\lan}{\langle}
\newcommand{\ran}{\rangle}
\newcommand{\setad}{\rightarrow}
\newcommand{\Setad}{\Rightarrow}
\newcommand{\Setae}{\Leftarrow}
\newcommand{\fii}{\varphi}
	\newcommand{\properpagestyle}
\DeclareMathOperator{\supp}{supp}
\DeclareMathOperator{\Id}{Id}
\DeclareMathOperator{\Proj}{Proj}\DeclareMathOperator{\proj}{Proj}
\DeclareMathOperator{\ch}{char}
\DeclareMathOperator{\idemp}{Idemp}
\title{On *-Clean Group Rings over SLC-groups}
\author{
    \textbf{Kisnney Almeida} \\
    Universidade Estadual de Feira de Santana,\\
Departamento de Ciências Exatas,\\
Av. Transnordestina S/N, CEP 44036-900 - Feira de Santana - BA - Brazil. \\
    \texttt{kisnney@gmail.com}
    \and
    \textbf{Jacqueline Costa Cintra} \\
    Universidade Estadual de Feira de Santana,\\
Departamento de Ciências Exatas,\\
Av. Transnordestina S/N, CEP 44036-900 - Feira de Santana - BA - Brazil. \\
    \texttt{jccintra@uefs.br}
    \and
    \textbf{Mauricio Araujo Ferreira} \\
    Universidade Estadual de Feira de Santana,\\
Departamento de Ciências Exatas,\\
Av. Transnordestina S/N, CEP 44036-900 - Feira de Santana - BA - Brazil. \\
    \texttt{maferreira@uefs.br}
    \and
    \textbf{Edward Landi Tonucci} \\
    Universidade Estadual de Feira de Santana,\\
Departamento de Ciências Exatas,\\
Av. Transnordestina S/N, CEP 44036-900 - Feira de Santana - BA - Brazil. \\
    \texttt{eltonucci@uefs.br}
}
\begin{document}

\maketitle

\abstract{The property of $*$-cleanness in group rings has been studied for some groups considering the classical involution, given by $g^*=g^{-1}$. A group is called an SLC-group if its quotient by its center is isomorphic to the Klein group; these groups are equipped with its own canonical involution,  which usually does not coincide with the classical one. In this paper we study the $*$-cleanness of $RG$ when $G$ is an SLC-group, considering $*$ as its canonical involution. In that context, we prove that if $RG$ is $*$-clean then $G$ is the direct product of $Q_8$ and an abelian group with some extra properties and we find a converse for some specific cases, generalizing a result by Gao, Chen and Li for $Q_8$.}

\section{Introduction}

Clean rings were introduced by Nicholson in 1977 \cite{N77}, in the context of exchange rings, and have since attracted a lot of attention. A ring is said to be {\bfseries clean} if each of its elements can be written as the sum of a unit and an idempotent.

We define a {\bfseries ring involution $*$} in a ring $R$ as an antiautomorphism of order $2$, \ie, an application $*:R\to R$ such that
$$(r+s)^*=r^*+s^*,\quad (rs)^*=s^*r^*\quad \text{and}\quad (r^*)^*=r,$$
for all $r,s\in R$. In this case, we call $R$ a $*$-ring.

Va\v{s} \cite{V10} defined a {\bf $*$-clean ring} as a $*$-ring for which all their elements can be written as the sum of a unit and a projection - an idempotent $*$-invariant element. Naturally, since every $*$-clean ring is clean, the investigation on $*$-cleanness may be seen as determining conditions for a clean ring to be $*$-clean.

Similarly to the ring case, a {\bfseries group involution} in a group $G$ is a group antiautomorphism  of order 2. Given a group involution $*$ in a group $G$,
 it may be linearly extended to a ring involution for the group ring $RG$, which we also denote by $*$. An important special case is given by the classical involution $*$ defined by the inversion $g^*=g^{-1}$ in any group $G$, for which the $*$-cleanness has been widely studied \cite{GCL15, HLT15, HLY15, LPY15}.

  Assuming $*$ as the linear extension of any involution on $G$, in \cite{JM06} the authors asked when the set $(RG)^+=\che\alpha\in RG:\alpha^*=\alpha\chd$ is commutative. In that paper, it was proved that if $G$ is non-abelian and $char(R)\neq2$, then $(RG)^+$ is commutative if and only if $G'=\che1,s\chd$ and the involution $*$ on $G$ is given by
$$x^*=\left\{\begin{array}{rl}
x&\text{if}\ x\in\mathcal{Z}(G)\\
sx&\text{if}\ x\notin\mathcal{Z}(G).
\end{array}
\right.$$

The above application generally does not define an involution on $G$ and if $G$ is non-abelian this map is an involution if and only if $G'=\che 1,s\chd$ and $G$ has the {\bf lack of commutativity} property (LC for short), i.e., if $xy=yx$, then $x,y$ or $xy$ belongs to $\centro(G)$. LC-groups such that $G'=\che 1,s\chd$ are called {\bf SLC-groups} and the involution defined above is referred to be the {\bf canonical involution} (of SLC-groups). That definition is equivalent to $G/\centro(G)\simeq C_2\times C_2$. For more details, \cf \cite{GJP96,PM22}.

SLC-groups equipped with this involution appear as solutions to a series of other problems concerning group rings with involution, such as normality ($\alpha^*\alpha=\alpha\alpha^*$) \cite{CV20}, Lie identities in $(\mathbb{K} G)^+$ \cite{LSS09} and commutativity and anticommutativity in $(RG)^+$ and $(RG)^-=\che \alpha\in RG:\alpha^*=-\alpha\chd$ for linear and some non-linear extension of the involution $*$ in $G$ \cite{BP06,GP13a,GP13b,GP14,PT20}.

Due to the quantity and variety of problems in which SLC-groups with canonical involution appear as solutions in the context of group rings with involution, it is quite natural to ask when $RG$ is $*$-clean for $G$ in this class of groups. As far as we know, there are no currently published results on $*$-cleanness of those group rings, except for $G=Q_8$ \cite{GCL15}, for which the canonical involution coincides with the classical one. We will try to fill that gap in this paper.

Our main result is the following:

\begin{theoremA}
    Let $G$ be an SLC-group with canonical involution $*$ and $R$ be a unital commutative ring such that $2\in U(R)$. If $RG$ is $*$-clean then $G=Q_8\times A$, with $A$ abelian, such that

\begin{enumerate}
    \item
    The group $A$ has no elements of order 4;

    \item
    If $p$ is a prime and there is $n\in\BN$ such that $p$ divides $2^n+1$ then $A$ has no elements of order $p$.

  Besides, the equation $X^2+Y^2+Z^2+1=0$ has no solutions in $R$.
\end{enumerate}
\end{theoremA}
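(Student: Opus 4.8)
The plan is to exploit the central, involution-fixed element $s$ together with the hypothesis $2\in U(R)$ to split $RG$ into a commutative part and a quaternionic part, and then to read off every stated condition from these two pieces. Since $s\in\centro(G)$ with $s^2=1$ and $s^*=s$, and since $2\in U(R)$, the elements $e=(1+s)/2$ and $f=(1-s)/2$ are orthogonal central projections with $e+f=1$, so $RG\cong RGe\times RGf$ as a product of $*$-rings, and $RG$ is $*$-clean iff both factors are. On $RGe$ the element $s$ acts as $1$, so $RGe\cong R[G/\langle s\rangle]$; as $G'=\langle s\rangle$ this quotient is abelian and the canonical involution descends to the identity (because $g^*=sg\equiv g$), so here $*$-clean is just clean. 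On $RGf$ the element $s$ acts as $-1$, so the non-central representatives $x,y$ of the nontrivial cosets of $G/\centro(G)\cong C_2\times C_2$ satisfy $(xf)(yf)=-(yf)(xf)$ and $(xf)^2=x^2f$, $(yf)^2=y^2f$ with $x^2,y^2\in\centro(G)$; thus $RGf$ is a generalized quaternion algebra over the commutative ring $C=R\,\centro(G)\,f$ with the standard conjugation involution $(xf)^*=-xf$.

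Next I would show that $*$-cleanness forbids any split direction in this quaternion factor. If some generator can be chosen with $x^2$ a central square, its quaternion symbol is split, so a suitable $*$-invariant quotient of $RGf$ is a $2\times2$ matrix ring with the symplectic involution; there the only projections are scalar idempotents, and over a residue field the matrix $\mathrm{diag}(0,1)$ cannot be written as a projection plus a unit, contradicting $*$-cleanness. Running the analogous idempotent trick on a central element $w$ of order $4$ (using $(1-w^2)/2$ to manufacture a square root of $-1$, hence a solution of $X^2+Y^2+Z^2+1=0$ in a quotient) shows $\centro(G)$ has no element of order $4$. Together these force each generator to satisfy $x^2=y^2=s$, so $\langle x,y\rangle\cong Q_8$, and they make the $2$-primary part of $\centro(G)$ elementary abelian, so $\langle s\rangle$ splits off as a direct factor. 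Choosing $A$ with $\centro(G)=\langle s\rangle\times A$, one checks $Q_8\cap A=1$ and $Q_8A=G$ (since $Q_8$ already meets every coset of the centre), giving $G=Q_8\times A$ with $A$ abelian and free of order-$4$ elements.

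Finally, with $G=Q_8\times A$ one has $RG=(RA)[Q_8]$, and repeating the $e/f$ split over the base $RA$ identifies the quaternion factor as $\mathbb{H}(RA)$ with conjugation. Adapting Gao--Chen--Li from $R$ to the commutative base $RA$, this factor is $*$-clean iff $X^2+Y^2+Z^2+1=0$ has no solution in $RA$; since $R\hookrightarrow RA$, unsolvability in $RA$ forces unsolvability in $R$, which is the stated condition on $R$. For a prime $p$ with $p\mid 2^n+1$ and an element $a\in A$ of order $p$, I would pass to the $*$-invariant quotient $R[Q_8\times C_p]$ and then to its cyclotomic component $\mathbb{H}(R[\zeta_p])$; the arithmetic input is that $p\mid 2^n+1$ is exactly the condition under which $-1$ becomes a sum of two (hence three) squares in $R[\zeta_p]$, so the equation acquires a solution there and $*$-cleanness fails, ruling out such $p$.

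The main obstacle I expect is the structural step: converting the pointwise ``no split direction, no order-$4$ centre'' information coming out of the quaternion factor into the clean global decomposition $G=Q_8\times A$, in particular securing the central complement $A$ and handling a non-finitely-generated $A$ when reducing to the quotient groups $Q_8\times C_p$. Closely behind it is the number-theoretic lemma identifying $p\mid 2^n+1$ with the level drop of $R[\zeta_p]$ (equivalently, with $-1$ being a sum of two squares there), which is precisely what ties solvability of $X^2+Y^2+Z^2+1=0$ to the divisibility condition.
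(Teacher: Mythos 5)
Your opening moves coincide with the paper's: both split $RG$ by the central projections $e=(1+s)/2$ and $f=(1-s)/2$ and derive every obstruction from the quaternionic factor $(RG)f$, and your local computations (only scalar projections for the symplectic involution on $M_2(C)$, an order-$4$ central element producing a square root of $-1$) are correct. The genuine gap is the structural dichotomy you rely on: it is \emph{not} true that the absence of a ``split direction'' together with the absence of central elements of order $4$ forces $x^2=y^2=s$. The classification of SLC-groups (Theorem \ref{indecomp}) contains, for instance, the type-$4$ group with $m=m_2=2$, of order $16$: there $x^2=s$ while $y^2=(xy)^2=b$, with $b$ central of order $2$ and $b\notin\langle s\rangle$, so the centre has exponent $2$; and over $C=R\,\mathcal{Z}(G)f$ with $R=\BQ$ and $A=1$ one has $C\simeq\BQ\times\BQ$, in which neither $-1$ nor $b$ is a square, so neither of your two mechanisms fires, yet $G\neq Q_8\times A$. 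To eliminate this case (and its type-$5$ analogue) you must first decompose $C$ along the idempotents $(1\pm b)/2$ and locate a split quotient there --- that is, you cannot avoid the case analysis over the five presentations. The paper runs exactly that case analysis, uniformly, through Lemma \ref{lemagamma} (a zero-divisor criterion: if $(1-\gamma^2)\tau(1-s)=0$ while $(z-4^{-1}\gamma)\tau(1-s)\neq0$ for every central $z$, then $RG$ is not $*$-clean), instantiated with a different pair $(\gamma,\tau)$ for each presentation. Your proposal never engages with the classification, which is where the real work lies --- a difficulty you yourself flag in your closing paragraph but do not resolve.

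A second gap concerns condition 2. You pass to ``the cyclotomic component $\mathbb{H}(R[\zeta_p])$'' of $R[Q_8\times C_p]$, but $RC_p$ splits off such a component only when $p\in\mathcal{U}(R)$, a hypothesis Theorem A does not impose ($R$ may even have characteristic $p$). The paper sidesteps this by proving the identity $(\alpha^2+\beta^2+g^{2^n})(g-1)=0$ inside $\BZ[1/2]\,C_p$ (Lemma \ref{propexmau}) and feeding the resulting zero divisor directly into Lemma \ref{lemagamma}, with no decomposition of $RC_p$ required. (Relatedly, only the implication ``$p\mid 2^n+1$ implies $-1$ is a sum of two squares'' is needed; your ``exactly'' asserts an equivalence that is neither required nor established.) The remaining steps --- extracting $G=Q_8\times A$ once $x^2=y^2=s$ and the centre has no order-$4$ element, and deducing unsolvability in $R$ from unsolvability over the base $RA$ --- are fine and essentially match the paper's Lemma following Theorem \ref{indecomp} and its final lemma of Section \ref{secnec}.
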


From the definitions, it is clear that $*$-cleanness implies cleanness, so it is a reasonable task to look for conditions that are sufficient for clean rings to be $*$-clean. With that in mind, we have generalized a result for $Q_8$ by Chen, Gao and Li \cite{GCL15}, to any direct product with finite elementary abelian 2-groups.

\begin{theoremB}
    Let $G=Q_8\times P_2$, with $P_2$ being is a finite elementary abelian 2-group, $*$ being its standard SLC-group involution and let $R$ be a unital commutative ring such that $2\in U(R)$. Then $RG$ is $*$-clean if and only if $RG$ is clean and the equation $X^2+Y^2+Z^2+1=0$ has no solutions in $R$.
\end{theoremB}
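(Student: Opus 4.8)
The plan is to reduce the statement to the already settled case $G=Q_8$ by decomposing $RG$ as a finite direct product of copies of $RQ_8$ in a way that is compatible with the involution, and then to invoke the result of Gao, Chen and Li.

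First I would pin down how $*$ acts. Writing $G=Q_8\times P_2$, we have $\mathcal Z(G)=\mathcal Z(Q_8)\times P_2=\{\pm1\}\times P_2$ and $G'=\{1,s\}$ with $s=(-1,1)$. Applying the definition of the canonical involution case by case shows that $(q,p)^*=(q^*,p)$ for every $(q,p)\in G$, where $q^*$ denotes the canonical involution of $Q_8$; since $\mathcal Z(Q_8)=\{\pm1\}$ and the commutator subgroup of $Q_8$ is $\{1,-1\}$, this $q^*$ is exactly the classical involution $q\mapsto q^{-1}$. In particular $*$ fixes every element of $P_2$, so under the canonical identification $RG\cong RQ_8\otimes_R RP_2$ the involution is $*_{Q_8}\otimes\mathrm{id}$, acting trivially on $RP_2$.

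Next I would split $RP_2$. As $P_2\cong C_2^k$ and $2\in U(R)$, the factors $x-1$ and $x+1$ are comaximal, so each $R[x]/(x^2-1)$ decomposes as $R\times R$ and the Chinese Remainder Theorem yields an $R$-algebra isomorphism $RP_2\cong R^{2^k}$. Because the involution on $RP_2$ is the identity, it transports to the identity on $R^{2^k}$; in particular the primitive idempotents are $*$-invariant, so the involution does not permute the factors. Tensoring with $RQ_8$ then produces an isomorphism of $*$-rings
\[
(RG,*)\cong\prod_{i=1}^{2^k}(RQ_8,*_{Q_8}),
\]
with $*_{Q_8}$ acting on each factor.

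Finally I would use two elementary facts about finite direct products with componentwise involution: such a product is clean (resp. $*$-clean) if and only if each factor is clean (resp. $*$-clean). Together with the displayed isomorphism, this gives ``$RG$ clean $\iff RQ_8$ clean'' and ``$RG$ is $*$-clean $\iff RQ_8$ is $*$-clean''. The theorem of Gao, Chen and Li \cite{GCL15} asserts that $RQ_8$ is $*$-clean iff $RQ_8$ is clean and $X^2+Y^2+Z^2+1=0$ has no solution in $R$; chaining the three equivalences (and noting that the last condition depends only on $R$, not on $k$) yields the claim. The one step deserving genuine care is the compatibility of the decomposition with $*$ — verifying that the trivial involution on $RP_2$ fixes the CRT idempotents so that each factor inherits precisely $*_{Q_8}$ and no factors are swapped; the substantive content is otherwise entirely absorbed into the cited $Q_8$ result, which also explains why no new arithmetic condition on $R$ appears beyond the one already present for $Q_8$.
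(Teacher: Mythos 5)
Your proposal is correct, and it takes a genuinely different route from the paper. The paper obtains the ``only if'' direction by invoking Theorem A (the full necessary-conditions machinery of Section 5), and the ``if'' direction from Theorem \ref{extensaoc2}, which is an explicit element-by-element verification that $\Delta(G,C_2)=\tfrac{1-a}{2}RG$ is $*$-clean whenever $RH$ is, iterated over the $C_2$ factors, combined with Theorem \ref{teocgl}. You instead build a single isomorphism of $*$-rings $(RG,*)\cong (RQ_8,*_{Q_8})^{2^k}$ by applying the Chinese Remainder Theorem to $RP_2$ (using $2\in\uni(R)$ and the fact that $*$ fixes $RP_2$ pointwise, hence fixes the CRT idempotents), and then both directions fall out simultaneously from the direct-sum lemmas (Propositions \ref{limpezasomadireta.inducao} and \ref{slimpezasomadireta.inducao}) together with the $Q_8$ result. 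The two decompositions are secretly the same --- $\Delta(G,C_2)$ is isomorphic to $RH$ via $r\mapsto \tfrac{r}{2}(1-a)$ --- but your packaging is more symmetric, makes the ``only if'' direction elementary, and bypasses Theorem A entirely. One caveat, which you share with the paper's own proof: the converse direction of the Gao--Chen--Li theorem, as recorded in Theorem \ref{teocgl}, is stated only for \emph{local} $R$, so quoting it as an unconditional equivalence ``$RQ_8$ is $*$-clean iff $RQ_8$ is clean and the equation has no solution'' for an arbitrary commutative $R$ with $2\in\uni(R)$ glosses over a hypothesis that neither argument discharges; you should at least make that dependency explicit.
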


We may also restrict the ring instead of restricting the group to obtain the following.

\begin{theoremC}
    Let $R=\oplus_{i=1}^n\mathbb{F}_i$ be a semisimple ring and let $G=Q_8\times A$ with $A$ finite abelian and $*$ being its canonical SLC-group involution such that $\ch(R)$ does not divide  $|G|$. Then  $R G$ is $*$-clean if and only if the equation $X^2+Y^2+Z^2+1=0$ has no solution in $\mathbb{F}_i(\zeta_d)$, where $\zeta_d$ is a $d$-th primitive root of 1, for each $d\in\BN$ such that $A$ has an element of order $d$.
\end{theoremC}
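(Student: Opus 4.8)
The plan is to reduce the general statement to the single case of $\mathbb{F}Q_8$ for a field $\mathbb{F}$ by peeling off the two decompositions (the semisimple decomposition of $R$ and the direct-product decomposition of $G$), carefully tracking the involution at each stage, and then to invoke Theorem~B (with trivial $P_2$) as the base case. First I would split off the ring: since $R=\oplus_{i=1}^n\mathbb{F}_i$, we have $RG=\oplus_{i=1}^n\mathbb{F}_iG$, and because the involution is $R$-linear it respects this decomposition. As units, idempotents and projections all decompose coordinatewise, $RG$ is $*$-clean if and only if each $\mathbb{F}_iG$ is. This reduces everything to a single field $\mathbb{F}=\mathbb{F}_i$ with $\ch\mathbb{F}\nmid|G|$, so in particular $\ch\mathbb{F}\neq2$.

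Next I would split off the abelian part. The key structural observation is that $A$ is central in $G=Q_8\times A$, so the canonical involution fixes $A$ pointwise and restricts to the classical involution $q\mapsto q^{-1}$ on $Q_8$; concretely, $*$ is the identity on the central commutative semisimple coefficient ring $\mathbb{F}A$ and inverts the elements of $Q_8$. By the Perlis--Walker decomposition, $\mathbb{F}A\cong\oplus_k\mathbb{F}(\zeta_{d_k})$, where the field $\mathbb{F}(\zeta_d)$ occurs precisely when $A$ has an element of order $d$. Since $*$ is the identity on $\mathbb{F}A$ it fixes all the primitive idempotents $e_k$, so each $e_k\mathbb{F}G$ is a $*$-invariant subring isomorphic, as a $*$-ring, to $F_k[Q_8]$ with $F_k=\mathbb{F}(\zeta_{d_k})$ and the classical $Q_8$-involution (the adjoined scalars $\zeta_{d_k}$ lie in the fixed coefficient field). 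Thus $\mathbb{F}G\cong\oplus_kF_k[Q_8]$ as $*$-rings, and it is $*$-clean if and only if every $F_kQ_8$ is.

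For the base case I would apply Theorem~B with $P_2$ trivial, so that $G=Q_8$ and the canonical involution is the classical one. Each $F_kQ_8$ is semisimple by Maschke (as $\ch F_k\neq2$), hence clean, so the cleanness hypothesis of Theorem~B is automatic and it yields that $F_kQ_8$ is $*$-clean if and only if $X^2+Y^2+Z^2+1=0$ has no solution in $F_k=\mathbb{F}_i(\zeta_{d_k})$. Combining the two reductions, $RG$ is $*$-clean if and only if, for every $i$ and every $d$ occurring as the order of an element of $A$, the equation $X^2+Y^2+Z^2+1=0$ has no solution in $\mathbb{F}_i(\zeta_d)$, which is the assertion.

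I expect the main obstacle to be the bookkeeping of the second paragraph: verifying that the SLC canonical involution genuinely descends to the classical $Q_8$-involution on each field factor $F_k[Q_8]$ — that it fixes $\mathbb{F}A$, commutes with the Perlis--Walker decomposition, and leaves the adjoined roots of unity fixed — since this is exactly what licenses applying the $Q_8$ result component by component. It is also worth recording why three squares appear: solvability of $X^2+Y^2+Z^2+1=0$ in $F_k$ is precisely isotropy of the norm form $\langle 1,1,1,1\rangle$ of the quaternion component $\left(\tfrac{-1,-1}{F_k}\right)$, i.e. the condition that this component splits as $M_2(F_k)$ rather than remaining a division algebra — the former failing to be $*$-clean under the induced symplectic involution, the latter always being $*$-clean.
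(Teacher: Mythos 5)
Your proposal is correct and follows essentially the same route as the paper: decompose $R$ into its field components, apply Perlis--Walker to $\mathbb{F}_iA$ to reduce to $\mathbb{F}_i(\zeta_d)Q_8$ with the classical involution (noting that the canonical involution fixes $\mathbb{F}_iA$ and hence the primitive idempotents), and invoke the $Q_8$ base case of Gao--Chen--Li. The only cosmetic differences are that you cite Theorem~B with trivial $P_2$ rather than the Gao--Chen--Li theorem directly, and you obtain cleanness of each $\mathbb{F}_i(\zeta_d)Q_8$ from Maschke rather than from Zhou's theorem; both are harmless since each $\mathbb{F}_i(\zeta_d)$ is a field, hence local.
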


The following corollary shows some explicit examples of $*$-clean group rings over SLC-groups.

\begin{corA}\label{corqcp}
    Let $G=Q_8\times A$, where $A$ is a finite abelian group and $*$ is its canonical involution as an SLC-group.

    \begin{enumerate}
        \item
        If $G$ contains an element of prime order $p$ such that $p\equiv 3, 5 \mod 8$ then  $\BQ G$ is not $*$-clean;

        \item
        If $A\simeq C_p$, where $p$ is a prime such that $p\equiv 7 \mod 8$ and $C_p$ is the cyclic group of order $p$, then $\BQ G$ is $*$-clean.
    \end{enumerate}
\end{corA}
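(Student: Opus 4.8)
The plan is to deduce the two parts from the main results: the necessary direction (part~1) from Theorem~A, and the sufficient direction (part~2) from Theorem~C. For part~1, note first that any prime $p\equiv 3,5\pmod 8$ is odd, so an element of $G=Q_8\times A$ of order $p$ must lie in the abelian factor $A$, since $Q_8$ has elements only of order $1,2,4$. By the second supplement to quadratic reciprocity, the congruence $p\equiv 3,5\pmod 8$ (i.e. $p\equiv\pm 3\pmod 8$) is exactly the statement that $2$ is a quadratic nonresidue modulo $p$; Euler's criterion then gives $2^{(p-1)/2}\equiv -1\pmod p$, so $p$ divides $2^{(p-1)/2}+1$. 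Thus $A$ has an element of order $p$ with $p\mid 2^n+1$ for $n=(p-1)/2$, which is precisely what condition~(2) of Theorem~A forbids whenever $\mathbb{Q}G$ is $*$-clean. Hence $\mathbb{Q}G$ is not $*$-clean.

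For part~2, I would take $R=\mathbb{Q}$, which is a field, hence semisimple, with $\ch(\mathbb{Q})=0$ not dividing $|G|=8p$, so Theorem~C applies to $G=Q_8\times C_p$. The orders of elements of $A=C_p$ are $d=1$ and $d=p$, so by Theorem~C it suffices to show that $X^2+Y^2+Z^2+1=0$ has no solution in $\mathbb{Q}(\zeta_1)=\mathbb{Q}$ and no solution in $\mathbb{Q}(\zeta_p)$. The case of $\mathbb{Q}$ is immediate, as a sum of squares is nonnegative in $\mathbb{R}$. The remaining case, showing that $-1$ is not a sum of three squares in $\mathbb{Q}(\zeta_p)$, is the crux.

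I would phrase this last step in terms of quadratic forms: over any field $K$ with $\ch(K)\neq 2$, the equation $X^2+Y^2+Z^2+1=0$ is solvable if and only if the form $\langle 1,1,1,1\rangle$ is isotropic, equivalently if and only if the quaternion algebra $\left(\frac{-1,-1}{K}\right)$ splits. For $K=\mathbb{Q}(\zeta_p)$ I would invoke the local--global principle: this algebra already splits at every Archimedean place (the field is totally imaginary for $p$ odd) and at every finite place of odd residue characteristic, so it splits over $\mathbb{Q}(\zeta_p)$ precisely when it splits at the places above $2$. Since $p$ is odd, $2$ is unramified in $\mathbb{Q}(\zeta_p)$, and each dyadic completion is the unramified extension of $\mathbb{Q}_2$ of degree $f=\operatorname{ord}_p(2)$; the invariant formula of local class field theory shows that $\left(\frac{-1,-1}{\mathbb{Q}_2}\right)$, which is ramified, stays ramified over this completion exactly when $f$ is odd.

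The arithmetic heart of the matter is therefore the claim that $\operatorname{ord}_p(2)$ is odd when $p\equiv 7\pmod 8$, which I would prove directly: from $p\equiv 7\pmod 8$ one gets $(p-1)/2\equiv 3\pmod 4$, which is odd, while $p\equiv -1\pmod 8$ makes $2$ a quadratic residue, so $\operatorname{ord}_p(2)$ divides the odd number $(p-1)/2$ and is hence odd. Consequently the quaternion algebra remains a division algebra over the dyadic completions, $\langle 1,1,1,1\rangle$ is anisotropic over $\mathbb{Q}(\zeta_p)$, the equation has no solution, and Theorem~C yields that $\mathbb{Q}G$ is $*$-clean. I expect the main obstacle to be exactly this transfer—passing from the elementary congruence on $\operatorname{ord}_p(2)$ to the non-solvability of the quadratic equation over the cyclotomic field—since it requires either the local--global analysis of the dyadic places above or an appeal to a known computation of the level (Stufe) of $\mathbb{Q}(\zeta_p)$.
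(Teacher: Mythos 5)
Your proposal is correct, but it diverges from the paper's proof in both parts, in instructive ways. For part~1 the paper does not use condition~(2) of Theorem~A at all: it invokes the known level formula $s(\mathbb{Q}(\zeta_p))=2$ for $p\equiv 3,5\pmod 8$, so that $X^2+Y^2+Z^2+1=0$ is solvable in $\mathbb{Q}(\zeta_p)$ with $Z=0$, and then concludes non-$*$-cleanness from the cyclotomic criterion of Theorem~C (the text says ``by Theorem~A,'' but Theorem~A only forbids solutions in $R=\mathbb{Q}$ itself, so the citation really has to be to Theorem~C / Theorem~\ref{teofields}). Your route -- $2$ is a nonresidue mod $p$, so $p\mid 2^{(p-1)/2}+1$ by Euler's criterion, contradicting condition~(2) of Theorem~A -- is a clean and fully correct alternative that applies Theorem~A exactly as stated and avoids the level formula entirely; it also makes explicit the arithmetic link that the paper's final corollary exploits in the opposite direction. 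For part~2 you follow the same strategy as the paper (reduce via Theorem~C to non-solvability over $\mathbb{Q}$ and $\mathbb{Q}(\zeta_p)$), but where the paper simply quotes $s(\mathbb{Q}(\zeta_p))=4$ for $p\equiv 7\pmod 8$ from Lam, you rederive it: identifying solvability with the splitting of $\left(\frac{-1,-1}{\mathbb{Q}(\zeta_p)}\right)$, reducing to the dyadic places, and checking that the residue degree $\operatorname{ord}_p(2)$ divides the odd number $(p-1)/2$. All of these steps are sound (including your explicit treatment of $d=1$, which the paper skips over), at the cost of invoking local--global machinery that the quoted level formula packages away.
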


This paper is organized as follows: In Section \ref{secgen} we prove some general results on $*$-rings we are going to need; in Section \ref{seclemmarings} we prove a necessary technical lemma on rings; in Section \ref{secbasic} we establish some notations and a preliminary result; in Section \ref{secnec} we prove Theorem A and in Section \ref{secsuf} we prove the other three results.

\section{ Some general results}\label{secgen}

We denote the set of units, idempotents and projections of a ring $R$ by $\uni(R)$, $\idemp(R)$ e $\proj(R)$, respectively. We will always assume $*$, $\circ$ and $\star$ are involutions in the corresponding rings.

We begin by presenting some results that are true for any rings with involution.

\begin{lema}\label{prop.involucao1ver2} Let $R$ be a unital $*$-ring, $S$ be a unital $\circ$-ring and $\fii:R\setad S$ be a homomorphism for which holds $\fii(a^*)=\fii(a)^\circ$.
\begin{enumerate}
\item If $r\in R$ is $*$-clean then $\fii(r)$ is $\circ$-clean.
\item If $\fii$ is an epimorphism and $R$ is $*$-clean then $S$ is $\circ$-clean.

\item If $\fii$ is an isomorphism then $R$ is $*$-clean if and only if $S$ is $\circ$-clean.
\end{enumerate}
\end{lema}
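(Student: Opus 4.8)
The plan is to prove the three statements in the natural order, since each builds on the previous one. The key observation throughout is that a homomorphism $\fii$ satisfying $\fii(a^*)=\fii(a)^\circ$ respects all the relevant structure: it sends units to units, idempotents to idempotents, and crucially projections to projections. I would first record this last fact as the technical heart of the argument: if $e\in R$ is a projection, meaning $e^2=e$ and $e^*=e$, then $\fii(e)^2=\fii(e^2)=\fii(e)$ and $\fii(e)^\circ=\fii(e^*)=\fii(e)$, so $\fii(e)\in\proj(S)$. Similarly, if $u\in\uni(R)$ with inverse $u\inv$, then $\fii(u)\fii(u\inv)=\fii(uu\inv)=\fii(1)=1$ and likewise on the other side, so $\fii(u)\in\uni(S)$.

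For part (1), I would take $r\in R$ that is $*$-clean and write $r=u+e$ with $u\in\uni(R)$ and $e\in\proj(R)$. Applying $\fii$ and using additivity gives $\fii(r)=\fii(u)+\fii(e)$, where by the preliminary observations $\fii(u)\in\uni(S)$ and $\fii(e)\in\proj(S)$. Hence $\fii(r)$ is $\circ$-clean, which is exactly the claim.

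For part (2), suppose $\fii$ is an epimorphism and $R$ is $*$-clean; I want every $s\in S$ to be $\circ$-clean. Given $s\in S$, surjectivity provides $r\in R$ with $\fii(r)=s$. Since $R$ is $*$-clean, $r$ is $*$-clean, so by part (1) its image $\fii(r)=s$ is $\circ$-clean. As $s$ was arbitrary, $S$ is $\circ$-clean. Part (3) then follows immediately: if $\fii$ is an isomorphism, then $\fii$ is in particular an epimorphism, so by part (2) $R$ being $*$-clean forces $S$ to be $\circ$-clean; for the converse, note that $\fii\inv:S\to R$ is also an isomorphism satisfying the compatibility condition $\fii\inv(s^\circ)=\fii\inv(s)^*$ (this needs a one-line check applying $\fii$ to both sides and using injectivity), so applying part (2) to $\fii\inv$ yields that $S$ being $\circ$-clean forces $R$ to be $*$-clean.

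I do not expect any genuine obstacle here, as the statement is essentially a functoriality observation. The only point requiring a small amount of care is the backward direction of part (3), where one must verify that the inverse map $\fii\inv$ still intertwines the two involutions; this is the single step I would write out explicitly rather than leave to the reader, while treating the preservation of units, idempotents, and projections as routine.
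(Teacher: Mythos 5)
Your proposal is correct and follows essentially the same route as the paper: apply $\fii$ to the decomposition $r=u+p$ for item 1, use surjectivity for item 2, and apply item 2 to $\fii$ and $\fii\inv$ for item 3. The only difference is that you spell out the routine verifications (preservation of units and projections, and that $\fii\inv$ intertwines the involutions) which the paper leaves implicit.
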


\dem
For the proof of item 1, if $r=u+p$, where $u\in \uni(R)$ and $p\in \proj(R)$, then $\fii(r)=\fii(u)+\fii(p)$, with $\fii(u)\in\uni(S)$ e $\fii(p)\in \proj(S)$. Item 2 follows easily from item 1 and item 3 follows from item 2.

\cqd

Cleanness interacts well with direct sum, as we can see in the following proposition.

\begin{prop}\label{limpezasomadireta.inducao}\cite[Proposition 2]{AC02} Let $R = \oplus_{i=1}^n R_i$ be a unital ring. Then $R$ is a clean ring if and only if each $R_i$ is clean.
\end{prop}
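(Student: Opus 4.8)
The plan is to reduce the statement to the elementary fact that units and idempotents of a finite direct sum of unital rings are exactly the tuples whose coordinates are units, respectively idempotents, in each factor; after that both implications follow by purely componentwise bookkeeping, so there is no serious obstacle. Concretely, for $(x_1,\dots,x_n)\in R=\oplus_{i=1}^n R_i$ I would first record that $(x_1,\dots,x_n)\in\uni(R)$ iff $x_i\in\uni(R_i)$ for every $i$, with the inverse computed coordinatewise, and that $(x_1,\dots,x_n)\in\idemp(R)$ iff $x_i\in\idemp(R_i)$ for every $i$; both are immediate from the coordinatewise multiplication in the direct sum.

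Granting these two characterizations, the ``if'' direction is direct: assuming each $R_i$ clean, an arbitrary $r=(r_1,\dots,r_n)\in R$ has coordinate decompositions $r_i=u_i+e_i$ with $u_i\in\uni(R_i)$ and $e_i\in\idemp(R_i)$, and then $u=(u_1,\dots,u_n)$ and $e=(e_1,\dots,e_n)$ satisfy $u\in\uni(R)$, $e\in\idemp(R)$ and $r=u+e$, so $R$ is clean. For the ``only if'' direction I would use the canonical projection $\pi_i\colon R\to R_i$, which is a surjective unital ring homomorphism; since such a map sends units to units and idempotents to idempotents, any clean decomposition $r=u+e$ in $R$ yields $\pi_i(r)=\pi_i(u)+\pi_i(e)$ with $\pi_i(u)\in\uni(R_i)$ and $\pi_i(e)\in\idemp(R_i)$, and surjectivity of $\pi_i$ forces every element of $R_i$ to admit such a decomposition. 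This last step is precisely the non-involutive analogue of item~2 of Lemma~\ref{prop.involucao1ver2}.

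The only point that needs any care — and it is routine rather than hard — is the verification of the unit and idempotent characterizations in the direct sum; everything else is formal. Because the argument for general $n$ is no more delicate than for $n=2$, I would treat arbitrary $n$ directly and avoid an induction.
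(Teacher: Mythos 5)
Your proposal is correct and complete. Note, however, that the paper gives no proof of this proposition at all: it is quoted verbatim from Anderson and Camillo \cite[Proposition 2]{AC02}, so there is nothing internal to compare against. Your componentwise argument --- that $\uni(R)$ and $\idemp(R)$ consist exactly of the tuples whose coordinates lie in $\uni(R_i)$ and $\idemp(R_i)$ respectively, giving the ``if'' direction by assembling coordinate decompositions and the ``only if'' direction via the surjective unital projections $\pi_i$ --- is the standard proof of this fact, and it is also the exact non-involutive analogue of how the paper itself handles the $*$-clean version (Lemma \ref{prop.involucao1ver2} combined with Lemma \ref{slimpezasomadireta}), so it fits the surrounding development well.
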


We obtain a similar result for $*$-cleanness by using the following.

\begin{prop}\cite[Proposition 4]{V10} Let $R$ be a $*$-ring. If $p$ is a projection of $R$ such $pRp$ and $(1-p)R(1-p)$ are both $*$-clean, then $R$ is $*$-clean.
\label{prop4.v10}
\end{prop}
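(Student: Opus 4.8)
The plan is to use the Peirce decomposition of $R$ with respect to the projection $p$ and to argue that the involution behaves like a conjugate transpose on the resulting $2\times2$ block structure. Every $a\in R$ decomposes as
$$a = pap + pa(1-p) + (1-p)ap + (1-p)a(1-p),$$
which I record as the block matrix $\begin{pmatrix} a_{11} & a_{12}\\ a_{21} & a_{22}\end{pmatrix}$ with $a_{11}\in pRp$, $a_{22}\in(1-p)R(1-p)$, $a_{12}\in pR(1-p)$ and $a_{21}\in(1-p)Rp$. Since $p^*=p$ and $(1-p)^*=1-p$, the involution restricts to an involution on each corner, so it is legitimate to speak of projections of $pRp$ and of $(1-p)R(1-p)$; moreover, if $q_1\in\proj(pRp)$ and $q_2\in\proj((1-p)R(1-p))$, then $q_1+q_2\in\proj(R)$, because the corners are orthogonal (so $q_1q_2=q_2q_1=0$ and $(q_1+q_2)^2=q_1+q_2$) and $(q_1+q_2)^*=q_1+q_2$.

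Fix $a\in R$. First I would clean the lower corner: since $(1-p)R(1-p)$ is $*$-clean, write $a_{22}=u_2+q_2$ with $u_2\in\uni((1-p)R(1-p))$ and $q_2\in\proj((1-p)R(1-p))$, so that there is $u_2^{-1}\in(1-p)R(1-p)$ with $u_2u_2^{-1}=u_2^{-1}u_2=1-p$. This lets me form the Schur complement from the $(2,2)$ entry,
$$c = a_{11} - a_{12}\,u_2^{-1}\,a_{21}\in pRp.$$
Now I invoke the other hypothesis: since $pRp$ is $*$-clean, $c$ is $*$-clean there, so write $c=u_1+q_1$ with $u_1\in\uni(pRp)$ and $q_1\in\proj(pRp)$. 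I then propose the projection $q=q_1+q_2\in\proj(R)$ and claim $a-q\in\uni(R)$.

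To verify the claim I would compute $a-q$ in block form as $\begin{pmatrix} a_{11}-q_1 & a_{12}\\ a_{21} & u_2\end{pmatrix}$ and factor it, using that the $(2,2)$ block is invertible, as
$$\begin{pmatrix} a_{11}-q_1 & a_{12}\\ a_{21} & u_2\end{pmatrix} = \begin{pmatrix} 1 & a_{12}u_2^{-1}\\ 0 & 1\end{pmatrix}\begin{pmatrix} c-q_1 & 0\\ 0 & u_2\end{pmatrix}\begin{pmatrix} 1 & 0\\ u_2^{-1}a_{21} & 1\end{pmatrix},$$
where the $(1,1)$ Schur complement is $(a_{11}-q_1)-a_{12}u_2^{-1}a_{21}=c-q_1=u_1$. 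The two triangular factors are units of $R$: the off-diagonal entries $a_{12}u_2^{-1}\in pR(1-p)$ and $u_2^{-1}a_{21}\in(1-p)Rp$ square to zero, so $1+a_{12}u_2^{-1}$ and $1+u_2^{-1}a_{21}$ are units with inverses $1-a_{12}u_2^{-1}$ and $1-u_2^{-1}a_{21}$. The middle factor equals $u_1+u_2$, a unit of $R$ with inverse $u_1^{-1}+u_2^{-1}$, again because the two corners are orthogonal. Hence $a-q$ is a product of three units, so $a=(a-q)+q$ is a $*$-clean decomposition and $R$ is $*$-clean.

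The genuine obstacle is the off-diagonal part of $a$: the naive attempt of cleaning $a_{11}$ and $a_{22}$ separately and setting $q=q_1+q_2$ fails, because $\begin{pmatrix} u_1 & a_{12}\\ a_{21} & u_2\end{pmatrix}$ need not be invertible. The point that makes the argument go through is to clean the Schur complement $c=a_{11}-a_{12}u_2^{-1}a_{21}$ in $pRp$ rather than $a_{11}$ itself; this is exactly what forces the Schur complement of $a-q$ to be the unit $u_1$. I would then only need to double-check the two routine facts used implicitly — that $*$ restricts to an involution on the corners, and that the cross terms between the orthogonal corners vanish — both of which follow immediately from $p^*=p$ and $p^2=p$.
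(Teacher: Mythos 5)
Your proof is correct: the Peirce decomposition, the cleaning of the $(2,2)$ corner, the Schur complement $c=a_{11}-a_{12}u_2^{-1}a_{21}$, the LDU factorization of $a-q$, and the verification that $q_1+q_2$ is a projection all check out. Note that the paper itself gives no proof of this proposition --- it is quoted from Va\v{s} \cite{V10} --- and your argument is essentially the one underlying that cited result (the $*$-adapted version of the Han--Nicholson Peirce/Schur-complement proof that cleanness of $pRp$ and $(1-p)R(1-p)$ implies cleanness of $R$), so there is nothing to compare beyond confirming it works.
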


\begin{lema}\label{slimpezasomadireta}
Let $R_1$, $R_2$ be rings such that $R=R_1\oplus R_2$ is a unital ring. If $R$ is  a $*$-ring such that   $\circ=*_{|R_1}$ and $\star=*_{|R_2}$ are involutions in $R_1$ e $R_2$ respectively, then $R$ is $*$-clean if and only if $R_1$ and $R_2$ are $\circ$-clean and $\star$-clean respectively.
\end{lema}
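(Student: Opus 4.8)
The plan is to realize the ring direct sum through a single central projection and then invoke the two structural results already at hand: Proposition \ref{prop4.v10} for sufficiency and Lemma \ref{prop.involucao1ver2} for necessity. First I would introduce the canonical central idempotents $e_1=(1_{R_1},0)$ and $e_2=(0,1_{R_2})=1-e_1$ of $R=R_1\oplus R_2$. Since by hypothesis $*$ restricts to involutions on each summand, $*$ maps $R_1$ into $R_1$ and $R_2$ into $R_2$; in particular it fixes the identity of each summand, so $e_1^*=e_1$ and $e_1$ is a projection. The corner ring $e_1Re_1$ is then $R_1\oplus 0$, and the map $a\mapsto(a,0)$ is a ring isomorphism $R_1\to e_1Re_1$ carrying $\circ$ to the restriction of $*$ to $e_1Re_1$; symmetrically for $R_2$ and $(1-e_1)R(1-e_1)=e_2Re_2$. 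Thus $(R_1,\circ)$ and $(R_2,\star)$ are isomorphic, as $*$-rings, to the two corner rings.

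For the \textbf{if} direction, assume $R_1$ is $\circ$-clean and $R_2$ is $\star$-clean. By the $*$-ring isomorphisms just described together with Lemma \ref{prop.involucao1ver2}(3), the corner rings $e_1Re_1$ and $(1-e_1)R(1-e_1)$ are both $*$-clean. Proposition \ref{prop4.v10}, applied to the projection $p=e_1$, then yields at once that $R$ is $*$-clean.

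For the \textbf{only if} direction, I would instead use the projection homomorphisms $\pi_i\colon R\to R_i$, since Proposition \ref{prop4.v10} supplies only the sufficiency. Each $\pi_i$ is a surjective ring homomorphism, and writing $a=(a_1,a_2)$ we have $a^*=(a_1^\circ,a_2^\star)$, so $\pi_1(a^*)=\pi_1(a)^\circ$ and $\pi_2(a^*)=\pi_2(a)^\star$. Hence $\pi_1$ and $\pi_2$ satisfy the hypotheses of Lemma \ref{prop.involucao1ver2}, and item 2 of that lemma shows that $R$ being $*$-clean forces $R_1$ to be $\circ$-clean and $R_2$ to be $\star$-clean, completing the equivalence.

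The argument reduces to two bookkeeping points: identifying $e_1$ as a projection and matching the restricted involution on each corner with the given $\circ$ and $\star$. Neither is a genuine obstacle; the only step requiring care is confirming that $*$ really preserves each summand, so that $e_1^*=e_1$ and the corner involutions are precisely $\circ$ and $\star$. This is exactly what the hypothesis that $*_{|R_1}$ and $*_{|R_2}$ are involutions guarantees, and once it is made explicit the rest is immediate.
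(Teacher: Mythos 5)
Your proof is correct and follows essentially the same route as the paper: the forward direction via the canonical projections and Lemma \ref{prop.involucao1ver2}(2), and the converse by identifying $R_1=e_1Re_1$ and $R_2=(1-e_1)R(1-e_1)$ and applying Proposition \ref{prop4.v10}. Your explicit verification that $e_1^*=e_1$ (hence that $e_1$ is a projection) is a detail the paper leaves implicit, but the argument is the same.
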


\dem

First note that since $R=R_1\oplus R_2$ then $1_R=e_1+e_2$, with $e_i$ being the unity of $R_i$ for $i=1,2$.

($\Rightarrow$) It follows from Lemma \ref{prop.involucao1ver2} by using the canonical projections as $\fii$.

($\Setae$) Since $R_1=e_1Re_1$, $R_2=(1-e_1)R(1-e_1)$ and $e_1\in \proj(R)$ then the result follows from Proposition \ref{prop4.v10}.
\cqd

\begin{prop}\label{slimpezasomadireta.inducao} Let $R = \oplus_{i=1}^n R_i$ be a unital ring. If $R$ has an involution $*$ such that $*_i = *_{|R_i}$ are involutions on $R_i$, respectively, then $R$ is $*$-clean if and only if each $R_i$ is $*_i$-clean.
\end{prop}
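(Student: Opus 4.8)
The plan is to prove Proposition \ref{slimpezasomadireta.inducao} by induction on $n$, using Lemma \ref{slimpezasomadireta} as the engine for the inductive step. The base case $n=1$ is trivial, since then $R=R_1$ and $*=*_1$, so the statement is a tautology. For the inductive step, I would assume the result holds for any decomposition into fewer than $n$ summands and establish it for $R=\oplus_{i=1}^n R_i$.

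\medskip

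The key idea is to split off one summand and bundle the rest. First I would write $R = R_1 \oplus \left(\bigoplus_{i=2}^n R_i\right)$ and set $S = \bigoplus_{i=2}^n R_i$, so that $R = R_1 \oplus S$. To apply Lemma \ref{slimpezasomadireta} I must check its hypotheses: that $*$ restricts to an involution on each of the two factors $R_1$ and $S$. The restriction $*_1 = *_{|R_1}$ is an involution by assumption. For $S$, I need to observe that $*$ restricts to an involution on $S$; this follows because $S$ is the image of the complementary projection and $*$ fixes each $R_i$ for $i\geq 2$, so $*_{|S}$ is well-defined and inherits the antiautomorphism-of-order-two property from the fact that each $*_i$ for $i\geq 2$ is an involution on $R_i$. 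Once this is verified, Lemma \ref{slimpezasomadireta} gives that $R$ is $*$-clean if and only if $R_1$ is $*_1$-clean and $S$ is $*_{|S}$-clean.

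\medskip

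The final move is to apply the inductive hypothesis to $S = \bigoplus_{i=2}^n R_i$, which is a direct sum of $n-1$ summands with $*_{|S}$ restricting to the involution $*_i$ on each $R_i$. By induction, $S$ is $*_{|S}$-clean if and only if each $R_i$ (for $i=2,\dots,n$) is $*_i$-clean. Combining this with the conclusion of the previous paragraph yields that $R$ is $*$-clean if and only if each $R_i$ is $*_i$-clean, completing the induction.

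\medskip

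The main obstacle I anticipate is not the cleanness bookkeeping, which follows mechanically from Lemma \ref{slimpezasomadireta}, but rather the verification that $*$ genuinely restricts to an involution on the bundled summand $S$. One must confirm that $*$ maps $S$ into itself (so that the restriction makes sense as a map $S\to S$) and that the unity decomposition behaves correctly under the iterated splitting, i.e. that $S$ is itself a unital ring with unit $\sum_{i=2}^n e_i$ corresponding to the complementary projection $1_R - e_1$. This is where the hypothesis that each $*_i$ is an involution on $R_i$ is essential and must be invoked carefully; it guarantees both that the blocks are individually $*$-stable and that their direct sum is $*$-stable, so that Lemma \ref{slimpezasomadireta} applies cleanly at each stage of the induction.
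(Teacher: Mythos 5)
Your proof is correct and follows exactly the route the paper takes: the paper's entire proof reads ``Induction and Lemma \ref{slimpezasomadireta}'', and your write-up is simply a careful expansion of that, including the worthwhile check that $*$ restricts to an involution on the bundled summand $S=\bigoplus_{i=2}^n R_i$.
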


\begin{proof}
    Induction and Lemma \ref{slimpezasomadireta}.
\end{proof}

Now we are going to approach how $*$-cleanness of group rings interacts with direct product with finite elementary abelian 2-groups on the basis group.

We recall the following well-known result.

\begin{prop}\cite[Proposition 3.6.7]{PS02} Let $R$ be a unital ring and let $H$ be a normal subgroup of $G$. If $|H|\in\uni(R)$, then
$$RG\simeq R(G/H)\oplus \Delta(G,H).$$
\label{prop.3.6.7.PS02}
\end{prop}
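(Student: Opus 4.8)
The plan is to split the canonical surjection $RG \onto R(G/H)$ by means of an explicit central idempotent built from $H$. Since $|H|\in\uni(R)$, set
$$e=\frac{1}{|H|}\sum_{h\in H}h\in RG.$$
First I would check that $e$ is idempotent: left multiplication by any fixed $h_0\in H$ permutes $H$, so $h_0\sum_{h\in H}h=\sum_{h\in H}h$; squaring gives $\big(\sum_{h\in H}h\big)^2=|H|\sum_{h\in H}h$, and dividing by $|H|^2$ yields $e^2=e$. The inverse $|H|\inv$ exists in $R$ and is central (being the inverse of the central element $|H|\cdot 1_R$), so the coefficients of $e$ are central. Then I would use normality of $H$ to see that $e$ is central in $RG$: for each $g\in G$, conjugation $h\mapsto ghg\inv$ permutes $H$, whence $geg\inv=e$, i.e.\ $ge=eg$; combined with centrality of the coefficients, $e$ commutes with every element of $RG$.

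With $e$ a central idempotent, the Peirce decomposition $RG=RGe\oplus RG(1-e)$ holds as a direct sum of two-sided ideals, each a unital ring with identities $e$ and $1-e$. It remains to identify these summands. Writing $\pi:RG\to R(G/H)$, $g\mapsto gH$, for the natural projection — a surjective ring homomorphism whose kernel is by definition $\Delta(G,H)$ — I would first show $\Delta(G,H)=RG(1-e)$. The inclusion $RG(1-e)\subseteq\Ker\pi$ is immediate from $\pi(e)=1$ (since $hH=H$ for all $h\in H$). For the reverse, recall $\Delta(G,H)$ is generated as an ideal by $\{\,h-1:h\in H\,\}$; using the identity $he=e$ one computes $(h-1)(1-e)=(h-1)-(he-e)=h-1$, so each generator lies in $RG(1-e)$, giving $\Delta(G,H)\subseteq RG(1-e)$.

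Once $\Delta(G,H)=RG(1-e)$ is established, the isomorphism $RGe\simeq R(G/H)$ follows cleanly: the first isomorphism theorem gives $RG/\Delta(G,H)\simeq R(G/H)$, while the direct-sum decomposition gives $RG/RG(1-e)\simeq RGe$. Combining the two identifications produces the claimed ring isomorphism $RG\simeq R(G/H)\oplus\Delta(G,H)$. The only places where the hypotheses are genuinely used — and hence the only real subtleties — are that invertibility of $|H|$ is exactly what makes $e$ a well-defined idempotent, and normality of $H$ is exactly what makes $e$ central, so that the splitting is one of rings and not merely of $R$-modules. I expect the main thing to verify carefully is this centrality (equivalently, that $\pi|_{RGe}$ is injective, i.e.\ $RGe\cap\Ker\pi=0$), which reduces entirely to the relations $he=e$ and $e^2=e$.
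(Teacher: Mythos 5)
Your proof is correct and complete: the paper itself gives no proof of this statement, citing it directly as \cite[Proposition 3.6.7]{PS02}, and your argument via the central idempotent $e=\frac{1}{|H|}\sum_{h\in H}h$, the Peirce decomposition $RG=RGe\oplus RG(1-e)$, and the identifications $RG(1-e)=\Delta(G,H)$ and $RGe\simeq R(G/H)$ is precisely the standard proof found in that reference. All the key verifications (idempotency from $h_0\sum_{h\in H}h=\sum_{h\in H}h$, centrality from normality of $H$, and the generator computation $(h-1)(1-e)=h-1$) are present and correct.
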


\begin{teo}\label{extensaoc2} Let $R$ be a unital commutative ring such that $2\in\uni(R)$, let $P_2$ is a finite elementary abelian $2$-group and $G=H\times P_2$ be a group with an involution $*$ such that $*_{|P_2}=\Id_{P_2}$. If $RH$ is $*$-clean, then $RG$ is $*$-clean.
\end{teo}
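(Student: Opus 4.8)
The plan is to induct on $|P_2|$ and reduce to the single-generator case $P_2 = C_2$. Since $P_2$ is elementary abelian I can write $P_2 = P_2' \times \langle t\rangle$ with $t^2 = 1$. Note first that $*$ must restrict to an involution on $H$ — this is forced by the hypothesis that $RH$ is a $*$-ring — and, since $*$ fixes $P_2$ pointwise, it also restricts to an involution on $H' := H \times P_2'$ with $H'$ being $*$-invariant. By the inductive hypothesis applied to $H$ and the smaller group $P_2'$, the ring $RH' = R(H\times P_2')$ is $*$-clean. Hence it suffices to prove the base case, which will upgrade the $*$-cleanness of $RH'$ to that of $RG = R(H'\times\langle t\rangle)$.

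For the base case I take $G = L \times \langle t\rangle$ with $L$ being $*$-invariant, $t$ central, $t^2=1$ and $t^*=t$. Using $2\in\uni(R)$ I form the elements $e_{\pm} = \tfrac{1}{2}(1\pm t)$. These are orthogonal central idempotents summing to $1$, and since $t^*=t$ each satisfies $e_{\pm}^{*} = e_{\pm}$, i.e.\ each is a projection. Therefore $RG = e_+RG \oplus e_-RG$ is a decomposition into two-sided $*$-invariant ideals (this is the involutive refinement of the splitting $RG\cong R(G/\langle t\rangle)\oplus\Delta(G,\langle t\rangle)$ furnished by Proposition \ref{prop.3.6.7.PS02}, as $|\langle t\rangle|=2\in\uni(R)$). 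By Proposition \ref{slimpezasomadireta} it then remains to show that both $e_+RG$ and $e_-RG$ are $*$-clean for the restricted involutions.

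The crucial step is to identify each summand, as a ring with involution, with $RL$ carrying its original involution. The ring homomorphisms $\phi_{\pm}\colon RG \to RL$ determined by $\ell\mapsto\ell$ for $\ell\in L$ and $t\mapsto\pm1$ are surjective with kernels $e_{\mp}RG$, hence induce ring isomorphisms $e_{\pm}RG\cong RL$. I would verify that these intertwine the involutions: for $\ell\in L$ one has $\phi_\pm(\ell^*)=\ell^*=\phi_\pm(\ell)^*$, while on the central element $t$ one has $\phi_\pm(t^*)=\phi_\pm(t)=\pm1=(\pm1)^*=\phi_\pm(t)^*$, the scalar $\pm1$ being fixed by $*$. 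Thus both summands are isomorphic to $RL$ as $*$-rings, so both are $*$-clean because $RL$ is; Proposition \ref{slimpezasomadireta} then gives the base case, and the induction completes the proof.

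The one point deserving care — though ultimately only bookkeeping — is the summand $e_-RG$, where $\phi_-$ sends $t\mapsto-1$: one must confirm that this sign does not distort the transported involution. It does not, because $(\ell e_-)^* = \ell^* e_-$, so under $e_-RG\cong RL$ the induced involution acts by $\ell\mapsto\ell^*$, exactly the original $*$ on $RL$; the substitution $t\mapsto-1$ affects only a scalar that $*$ leaves fixed. This is really the heart of the argument, since the decomposition and the reduction to $C_2$ are routine given the cited propositions.
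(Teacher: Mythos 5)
Your proof is correct and follows essentially the same route as the paper: induct down to $P_2=C_2$ and split $RG$ along the central projections $\tfrac{1\pm t}{2}$ (equivalently $RG\cong RL\oplus\Delta(G,\langle t\rangle)$ via Proposition \ref{prop.3.6.7.PS02}), then conclude with Lemma \ref{slimpezasomadireta}. The only difference is in handling the second summand: where the paper verifies the $*$-cleanness of $\Delta(G,\langle t\rangle)$ by an explicit computation with elements $\tfrac{r}{2}(1-t)$, you identify it with $RL$ as a $*$-ring via $t\mapsto -1$ and invoke Lemma \ref{prop.involucao1ver2}, which is a slightly cleaner way to finish the same argument.
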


\dem
Note that $G=H\times\underbrace{C_2\times\cdots\times C_2}_{k \text{ factors}}$, where $C_2$ is the cyclic group of order 2.

We will prove the result for $k=1$ and the result easily follows by induction on $k$. So assume $k=1$ hence $P_2=C_2$.

 Note that $C_2 \triangleleft G$ and $|C_2| = 2 \in \uni(R)$, thus by Proposition \ref{prop.3.6.7.PS02} we have
$$ RG \simeq R(G/C_2) \oplus \Delta(G,C_2) \simeq RH \oplus \Delta(G,C_2). $$

Let $a$ be the generator of $C_2$, and note that if $\alpha \in \Delta(G, C_2)$ then there exist $\alpha_{hi} \in R$, for $i = 0, 1$ and $h \in H$, such that
$$ \begin{array}{rcl}
\alpha &=& \displaystyle\sum_{h \in H, i=0,1} \alpha_{hi} h a^i(1 - a) \\
&=& \displaystyle\sum_{h \in H} \alpha_{h0} h(1 - a) + \sum_{h \in H} \alpha_{h1} h a(1 - a) \\
&=& \displaystyle\sum_{h \in H} \alpha_{h0} h(1 - a) + \sum_{h \in H} \alpha_{h1} h(a - 1) \\
&=& \displaystyle\sum_{h \in H} \alpha_{h0} h(1 - a) - \sum_{h \in H} \alpha_{h1} h(1 - a) \\
&=& \displaystyle\sum_{h \in H} (\alpha_{h0} - \alpha_{h1}) h(1 - a),
\end{array} $$
from which we can assume, for simplicity, that $\alpha = \dfrac{r}{2}(1 - a)$ with $r=2\sum_{h\in H}(\alpha_{h0}-\alpha_{h1})h \in RH$.

Note that $1_{\Delta(G, C_2)} = \dfrac{1 - a}{2}$, since $(1 - a)^2 = 2(1 - a)$, and thus $$r(1 - a) \cdot \dfrac{1 - a}{2} = r \cdot \dfrac{2(1 - a)}{2} = r(1 - a).$$

We will now show that $\Delta(G, C_2)$ is $*$-clean.

Let $\alpha = \dfrac{r}{2}(1 - a) \in \Delta(G, C_2)$, and we will show that $\alpha$ is a $*$-clean element of $\Delta(G, C_2)$.

In fact, from the $*$-cleanness of $RH$ we know that if $r \in RH$ then $r = u + p$, with $u \in \uni(RH)$ and $p^2 = p = p^*$. Thus, $\alpha = \dfrac{r}{2}(1 - a) = \left(\dfrac{u}{2} + \dfrac{p}{2}\right)(1 - a)$.

We will show that $\dfrac{u}{2}(1 - a) \in \uni(\Delta(G, C_2))$ and that $\left(\dfrac{p}{2}(1 - a)\right)^2 = \dfrac{p}{2}(1 - a) = \left(\dfrac{p}{2}(1 - a)\right)^*$.

To show that $\dfrac{u}{2}(1 - a) \in \uni(\Delta(G, C_2))$, it suffices to note that
$$ \dfrac{u}{2}(1 - a) \cdot \dfrac{u^{-1}}{2}(1 - a) = \dfrac{uu^{-1}}{4}(1 - a)^2 = \dfrac{1 - a}{2}. $$

Finally, note that
$$ \left(\dfrac{p}{2}(1 - a)\right)^2 = \dfrac{p^2}{4}(1 - a)^2 = \dfrac{p}{4} \cdot 2(1 - a) = \dfrac{p}{2}(1 - a) $$
and that $\dfrac{p}{2}(1 - a) = \left(\dfrac{p}{2}(1 - a)\right)^*$ follows from the fact that $p^* = p$ and $(1 - a)^* = (1 - a)$, since $a \in \centro(G)$.

Thus we have that $\Delta(G, C_2)$ is $*$-clean and by Lemma \ref{slimpezasomadireta} we conclude $RG$ is $*$-clean.
\cqd

\begin{cor}\label{corp2} Let $R$ be a unital ring such that $2\in\uni(R)$, let $G$ be a group and let $*$ be the classical involution. If $RG$ is $*$-clean, then $R(G\times P_2)$ is $*$-clean for any finite elementary abelian $2$-group $P_2$.
\end{cor}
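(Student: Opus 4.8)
The plan is to derive this as a direct application of Theorem \ref{extensaoc2}, with the role of the group $H$ there played by $G$ and the role of the ambient product $H\times P_2$ played by $G\times P_2$. The only genuine point to verify is that the classical involution on $G\times P_2$ satisfies the hypotheses of that theorem, namely that it restricts to the identity on $P_2$ and to the classical involution on $G$.

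First I would observe that, since $P_2$ is an elementary abelian $2$-group, every element $x\in P_2$ satisfies $x^2=1$, so $x^{-1}=x$. Writing $*$ for the classical involution on $G\times P_2$, for an element $x\in P_2$ we then have $x^*=x^{-1}=x$, hence $*_{|P_2}=\Id_{P_2}$. Next, for $g\in G$ we have $g^*=g^{-1}$, so the restriction of $*$ to the factor $G$ is precisely the classical involution on $G$. In particular, the hypothesis ``$RG$ is $*$-clean'' in the statement refers exactly to this restricted involution, so no compatibility issue arises when we feed it into Theorem \ref{extensaoc2}.

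With these observations in place, the hypotheses of Theorem \ref{extensaoc2} are met: $2\in\uni(R)$, $P_2$ is a finite elementary abelian $2$-group, the group $G\times P_2$ carries the involution $*$ with $*_{|P_2}=\Id_{P_2}$, and $RG$ is $*$-clean. Applying the theorem then yields that $R(G\times P_2)$ is $*$-clean, which is the assertion. Since the induction over the number of $C_2$-factors is already carried out inside Theorem \ref{extensaoc2}, no further bookkeeping is required here.

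I expect essentially no serious obstacle: the content of the corollary is entirely contained in Theorem \ref{extensaoc2}, and the work reduces to the elementary remark that inversion is trivial on a group of exponent $2$. The only mild subtlety is the commutativity assumption on $R$ appearing in Theorem \ref{extensaoc2}; if the corollary is intended for an arbitrary unital $R$, one should either add commutativity to its hypotheses or verify that the proof of Theorem \ref{extensaoc2} uses only $2\in\uni(R)$ rather than full commutativity of $R$.
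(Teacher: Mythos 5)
Your proposal is correct and follows exactly the paper's own argument: note that inversion is trivial on a group of exponent $2$, so the classical involution restricts to $\Id_{P_2}$, and then apply Theorem \ref{extensaoc2}. Your closing remark about the commutativity hypothesis on $R$ is a fair observation of a minor mismatch between the corollary's and the theorem's stated hypotheses, but it does not affect the argument.
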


\dem It suffices to note that if $*$ is the classical involution, then $*_{|P_2}=Id_{P_2}$, therefore the result follows from Theorem \ref{extensaoc2}.
\cqd

\section{A technical lemma on rings} \label{seclemmarings}

To prove Theorem A, we will need the following technical lemma, inspired by \cite[p. 77, Exercise 21]{L05}.

\begin{lema}\label{propexmau}
Let $S$ be a unital ring and $p\geq 3$ be an odd natural number. If $g\in S$ is a central element of $S$ such that $g^p=1$ then
\begin{enumerate}
\item
$$\prod _{k=0}^{t} \left(1 + g^{2^k}\right)$$ is a sum of two squares for each $t\geq 0$.

\item
Let $n\in\BN$ such that $p$ divides $2^n+1$. Then there are $\alpha,\beta\in S$ such that
$$\left(\alpha ^2+ \beta ^2+g^{2^n}\right)(g-1)=0.$$
\end{enumerate}
\end{lema}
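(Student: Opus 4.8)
The plan is to carry out everything inside the commutative subring generated by $g$: since $g$ is central, it commutes with all of its powers, so any expression built from $1$ and powers of $g$ lies in a commutative subring of $S$. This lets me freely use the two classical commutative identities that drive the argument, namely the Brahmagupta--Fibonacci identity $(a^2+b^2)(c^2+d^2)=(ac-bd)^2+(ad+bc)^2$ and the telescoping product $(1-x)\prod_{k=0}^{t}(1+x^{2^k})=1-x^{2^{t+1}}$, both of which are valid here because all the elements appearing are integer combinations of powers of $g$ and hence commute.

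For item 1, I would first observe that each factor $1+g^{2^k}$ is itself a sum of two squares. For $k\geq 1$ this is immediate, since $1+g^{2^k}=1^2+\left(g^{2^{k-1}}\right)^2$. For $k=0$ I use that $p$ is odd, so $(p+1)/2$ is an integer and $g=\left(g^{(p+1)/2}\right)^2$, whence $1+g=1^2+\left(g^{(p+1)/2}\right)^2$ is again a sum of two squares. With each factor written as a sum of two squares, an induction on $t$ using the Brahmagupta--Fibonacci identity shows that the whole product $\prod_{k=0}^{t}(1+g^{2^k})$ is a sum of two squares; at every step the elements produced remain inside the commutative subring generated by $g$, so the identity continues to apply.

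For item 2, note first that $n\geq 1$, since $p\geq 3$ is odd and cannot divide $2^0+1=2$. Taking $t=n-1$ in the telescoping identity gives $(1-g)\prod_{k=0}^{n-1}(1+g^{2^k})=1-g^{2^n}$, and by item 1 I may write $\prod_{k=0}^{n-1}(1+g^{2^k})=\alpha^2+\beta^2$ for suitable $\alpha,\beta\in S$. Using $(\alpha^2+\beta^2)(g-1)=-(1-g)(\alpha^2+\beta^2)=g^{2^n}-1$, I then expand the target expression to obtain
$$(\alpha^2+\beta^2+g^{2^n})(g-1)=(g^{2^n}-1)+g^{2^n}(g-1)=g^{2^n+1}-1.$$
Finally, since $p\mid 2^n+1$ and $g^p=1$, we have $g^{2^n+1}=1$, so the right-hand side vanishes, which is exactly the claim.

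The argument is essentially a matter of assembling standard commutative identities, so there is no deep obstacle; the main point requiring care is the exponent bookkeeping in item 2 — recognizing that the correct truncation of the product is at $t=n-1$, so that telescoping yields precisely $1-g^{2^n}$, and that the hypothesis $p\mid 2^n+1$ is exactly what collapses $g^{2^n+1}$ to $1$. I would also emphasize that $\alpha$ and $\beta$ are produced \emph{explicitly} from the inductive use of the Brahmagupta--Fibonacci identity in item 1, so they genuinely live in $S$ (in fact in the commutative subring generated by $g$), which is what the statement requires.
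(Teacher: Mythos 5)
Your proposal is correct. Item 1 is essentially the paper's own argument: the same base case $1+g=1^2+\left(g^{(p+1)/2}\right)^2$ exploiting that $p$ is odd, and the same induction via the Brahmagupta--Fibonacci identity (the paper writes the inductive step as $\left(a^2+b^2\right)\left(1+g^{2^{t+1}}\right)=\left(ag^{2^t}+b\right)^2+\left(bg^{2^t}-a\right)^2$, a sign variant of the identity you quote), with everything taking place in the commutative subring generated by $g$. For item 2, however, you take a genuinely different and noticeably cleaner route. The paper expands $\Pi=\prod_{k=0}^{n-1}\left(1+g^{2^k}\right)$ as the geometric sum $\sum_{i=0}^{2^n-1}g^i$, uses the congruence $2^n-1\equiv p-2 \pmod p$ to regroup this sum into blocks of length $p$, rewrites $\Pi+g^{2^n}$ as $(q+1)\sum_{i=0}^{p-1}g^i$, and observes that this is annihilated by $g-1$. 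You instead invoke the telescoping identity $(1-g)\Pi=1-g^{2^n}$ and finish with a two-line computation showing $\left(\alpha^2+\beta^2+g^{2^n}\right)(g-1)=g^{2^n+1}-1=0$, using $p\mid 2^n+1$ directly. Your version avoids the modular bookkeeping with $q$ entirely and makes the role of the hypothesis $p\mid 2^n+1$ more transparent; the paper's version has the minor virtue of exhibiting $\Pi+g^{2^n}$ explicitly as a multiple of $\sum_{i=0}^{p-1}g^i$. Your observation that $n\geq 1$ (so the product is nonempty) is a detail the paper leaves implicit, and it is worth stating as you do.
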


\begin{proof}
We prove item 1 by induction on $t$. For $t=0$, note that
$$1+g=1^2+\left(g^{\frac{p+1}{2}}\right)^2.$$

If the result is true for $t\geq 0$, then there are $a,b\in S$ such that
$$\prod_{k=0}^{t+1} \left(1 + g^{2^k}\right)=\left(a^2+b^2\right)\left(1+g^{2^{t+1}}\right)=\left(ag^{2^t}+b\right)^2 + \left(bg^{2^t}-a\right)^2,$$
which proves item 1.

For the proof of item 2, note that by hypothesis  $2^n-1\equiv p-2\mod p$, so there is a natural $q$ such that  $2^n-1=pq+p-2$ hence

\begin{multline*}
\Pi:= \prod_{k=0}^{n-1} \left(1 + g^{2^k}\right)=\sum_{i=0}^{2^n-1}g^i=\left(\sum_{j=0}^{q-1} g^{pj}\sum_{i=0}^{p-1} g^i\right)+ g^{pq}\sum_{i=0}^{p-2}g^i=q\sum_{i=0}^{p-1} g^i + \sum_{i=0}^{p-2}g^i=\\=\left((q+1)\sum_{i=0}^{p-1} g^i\right) - g^{p-1}.
\end{multline*}

Since $2^n\equiv p-1\mod p$ then $g^{2^n}=g^{p-1}$ hence
$$\left(\Pi+g^{2^n}\right)(g-1)=\left(\Pi+g^{p-1}\right)(g-1)=\left[(q+1)\sum_{i=0}^{p-1} g^i\right](g-1)=(q+1)(g^p-1)=0 $$
and the result follows from item 1.
\end{proof}

\section{SLC-groups and Group Rings}\label{secbasic}

We recall a group $G$ is an SLC-group if and only if $\faktor{G}{\mathcal{Z}(G)}$ is isomorphic to the Klein group. It easily follows that the SLC property is closed for direct product with abelian groups. With that in mind, the following result gives a full description of SLC-groups in terms of presentations which we will freely use.

\begin{teo}[\cite{JLPM95}]\label{indecomp}
A group $G$ is a SLC-group if and only if $G=D\times A$, where $A$ is abelian and $D$ is an  indecomposable $2$-group such that $D=\langle x,y,\centro(D)\rangle$, where $D$ admits one of the following presentations:

\begin{enumerate}
\item $D_1=\lan x,y,a: x^2=y^2=a^{m}=1\ran$;
\item $D_2=\lan x,y,a: x^2=y^2=a,\ a^{m}=1\ran$;
\item $D_3=\lan x,y,a,b: x^2=a^m=b^{m_2}=1,\ y^2=b\ran$;
\item $D_4=\lan x,y,a,b: x^2=a,\ a^m=b^{m_2}=1,\ y^2=b\ran$;
\item $D_5=\lan x,y,a,b,c: x^2=b,\ y^2=c,\ a^m=b^{m_2}=c^{m_3}=1\ran$;
\end{enumerate}
such that $G'=\che 1,s\chd$ with $s=(x,y)=a^{m/2}$, $m=2^k$ and $m_i=2^{k_i}$, being $k,k_i>0$ for all $i$. For simplicity, we omit in the above presentations the relations $[\{x,y,a,b,c\}, \{a,b,c\}]=1$,  which obviously imply   $a,b,c\in\centro(G)$, whenever possible. More precisely, $\centro(G)=\lan a\ran\times K$ for some abelian group $K$, which is given below.

\begin{enumerate}
\item $K=A$ if $D=D_1,D_2$;
\item $K=\lan b\ran \times A$ if $D=D_3,D_4$;
\item $K=\lan b\ran\times \lan c\ran \times A$ if $D=D_5$.
\end{enumerate}
\end{teo}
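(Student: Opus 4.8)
The plan is to prove both implications, treating the backward (``if'') direction as a direct computation and concentrating the real effort on the forward (``only if'') structural classification. For the backward direction, since the excerpt already records that the SLC property is preserved under direct product with abelian groups, it suffices to verify that each $D_i$ is itself an SLC-group. For this I would read off $\centro(D_i)$ from the presentation (every defining relator places $x^2$, $y^2$ and the commutator $s=(x,y)$ in the center, while $a,b,c$ are central by construction) and check directly that $D_i/\centro(D_i)=\lan\bar x,\bar y\ran\cong C_2\times C_2$ and $D_i'=\che 1,s\chd$. This is routine once one verifies the relevant commutator relations, so I would not dwell on it.

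For the forward direction I start from the characterization recalled just before the theorem: $G$ is SLC iff $G/\centro(G)\cong C_2\times C_2$. Fix $x,y\in G$ whose images generate $G/\centro(G)$, so that $G=\lan x,y,\centro(G)\ran$ and, the quotient being abelian, $G$ is nilpotent of class $2$. Setting $s=(x,y)$, the facts $x^2,y^2\in\centro(G)$ together with the bilinearity of commutators in a class-$2$ group give $(x^2,y)=(x,y)^2=s^2$; since $x^2$ is central this forces $s^2=1$, whence $s$ is a central involution and $G'=\che 1,s\chd$. I would also record the identity $(xy)^2=x^2y^2s$, which shows that the third relevant square is not independent of $x^2$ and $y^2$.

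Next I would normalize the squares so that the entire noncommutative behaviour of $G$ is concentrated in a finite $2$-group. Writing the central element $x^2$ as the product of its $2$-part and its odd part, I multiply $x$ by the unique central square root of the inverse of the odd part (which exists because squaring is an automorphism of an odd-order abelian group); this changes neither the image of $x$ in $G/\centro(G)$ nor the value of $s$, and makes $x^2$ a $2$-element. Doing the same for $y$, the group $N=\lan s,x^2,y^2\ran$ becomes a finite abelian $2$-group. The remaining task is to split $\centro(G)$ as a finite $2$-part carrying $s,x^2,y^2$ and adjoining a cyclic factor $\lan a\ran$ of order $m=2^k$ with $s=a^{m/2}$, times an abelian complement $A$, so that $G=D\times A$ with $D=\lan x,y,\centro(D)\ran$ indecomposable.

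The hard part will be exactly this last step: producing the five normal forms and proving indecomposability. Concretely, I would apply the structure theorem for finite abelian $2$-groups to the relevant central $2$-part, choosing a decomposition into cyclic factors adapted to the distinguished elements $s,x^2,y^2$: one factor $\lan a\ran$ must be selected so that $s=a^{m/2}$, and the case division is then governed by whether $x^2$ (respectively $y^2$) is trivial, a power of $a$, or generates a new cyclic factor $\lan b\ran$ (respectively $\lan c\ran$), which yields precisely $D_1,\dots,D_5$. The change-of-generator moves $x\mapsto xz$ and $y\mapsto yz'$ with $z,z'$ central are what allow one to reach these normal forms, and any configuration not matching one of them produces an abelian direct summand that is absorbed into $A$. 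Proving rigorously that these moves exhaust all possibilities, and that the resulting $D$ admits no further abelian direct factor, is the technical heart of the argument.
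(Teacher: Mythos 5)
This theorem is quoted from \cite{JLPM95} and the paper itself gives no proof of it, so there is no internal argument to compare yours against; I can only judge the sketch on its own terms. Your outline does follow what is essentially the standard route to this classification (pass to $G/\centro(G)\simeq C_2\times C_2$, deduce $s^2=1$ and $G'=\{1,s\}$ from $(x^2,y)=(x,y)^2$, normalize the squares $x^2,y^2$, then decompose the centre), and the easy direction together with the identities $(x^2,y)=s^2$ and $(xy)^2=x^2y^2s$ are correct. But the argument as written has genuine gaps, not just unexpanded routine steps.

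First, the normalization ``write the central element $x^2$ as the product of its $2$-part and its odd part'' silently assumes that $x^2$ is a torsion element, which does not follow from $G/\centro(G)\simeq C_2\times C_2$. In the free class-$2$ nilpotent group on $x,y$ modulo $(x,y)^2=1$ one has $\centro(G)=\lan x^2\ran\times\lan y^2\ran\times\lan s\ran\cong\Z\times\Z\times C_2$ and $G/\centro(G)\simeq C_2\times C_2$, yet no central translate of $x$ has a square of finite order, and the only finite subgroup of that $G$ is $\{1,s\}$, so no decomposition $G=D\times A$ with $D$ a finite $2$-group can exist; your proof must therefore locate and invoke a torsion or finiteness hypothesis that neither your sketch nor the paper's statement makes explicit. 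Second, even in the torsion case, extracting $\lan a\ran$ with $s=a^{m/2}$ (and likewise $\lan b\ran$, $\lan c\ran$) as a \emph{direct factor} of the possibly infinite abelian group $\centro(G)$ is not an application of the structure theorem for finite abelian $2$-groups: it requires choosing these cyclic subgroups pure and citing the fact that bounded pure subgroups are direct summands, and it genuinely fails when $s$ is divisible in $\centro(G)$ (e.g.\ the central product of $D_1$ with $m=2$ and a Pr\"ufer $2$-group amalgamated over $\lan s\ran$). Third, the step you yourself flag as ``the technical heart'' --- the case division that yields exactly $D_1,\dots,D_5$, the absorption of superfluous cyclic factors into $A$, and the proof that the resulting $D$ is indecomposable --- is the actual content of the theorem and is only described, not carried out. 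As it stands this is a plausible plan rather than a proof.
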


If an SLC-group $G$ is isomorphic to $D_i\times A$, as in Theorem \ref{indecomp}, we will say $G$ is {\bfseries of type i} and we are going to freely use the notations of Theorem \ref{indecomp} for $x,y,a,b,c$.

In this section and the next, we will assume $G$ is an SLC-group with $G'=\che1,s\chd$, equipped with the canonical involution $*$, and $\tau=\che 1=t_1,t_2,t_3,t_4\chd$ is a transversal of $G$ over $\centro(G)$, typically $\{1, x, y, xy\}$.

We will also assume $R$ is a commutative unital ring such that $2\in \mathcal{U}(R)$. Hence $RG$ is a group ring such that $e=\dfrac{1+s}{2}$, $f=\dfrac{1-s}{2}$ are orthogonal idempotents of $RG$, which means $e^2=e$, $f^2=f$, $ef=fe=0$. Hence $RG=(RG)e\oplus (RG)f$. We also remark $e$ and $f$ are central projections, \ie, projections that belong to the center of $RG$.

We begin with a technical lemma which is the basis for most of our proofs.

\begin{lema}\label{lemad1d2}
 Let $R$ be a ring and $G$ an SLC-group, with the notations and assumptions above. Then

\begin{enumerate}
  \item
  Every element $\alpha\in (RG)f$ may be uniquely written as
$$\alpha=\left[\sum_{j=1}^4\pae\sum_{i=0}^{\frac{m}{2}-1}x_{ij}a^i\pad t_j\right](1-s), \quad \text{with } x_{ij}\in RK \text{ for all } i,j.$$

\item
Given $\alpha$ as above,
$$\alpha^*=\left[\sum_{i=0}^{\frac{m}{2}-1}x_{i1}a^i-\sum_{j=2}^4\pae\sum_{i=0}^{\frac{m}{2}-1}x_{ij}a^i\pad t_j\right](1-s);$$

\item
Given $\alpha$ as above,
$$\alpha=\alpha^* \Leftrightarrow x_{ij}=0\ \text{ for all } j\geq2;$$

\item
$\proj((RG)f)=\che d(1-s)\,|\,\ds d=\sum_{i=0}^{\frac{m}{2}-1}x_{i1}a^i\ \text{and}\ d=2d^2\chd$.
\end{enumerate}
\end{lema}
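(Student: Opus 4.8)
The plan is to treat $RG$ as the free $R$-module on $G$ and to use that every group element factors uniquely as $t_j a^i k$ with $1\le j\le 4$, $0\le i<m$ and $k\in K$, which is available because $\tau$ is a transversal of $\centro(G)=\lan a\ran\times K$ and $\lan a\ran$ has order $m$. For item 1 the starting point is that $(RG)f=(RG)(1-s)$ (since $2\in\uni(R)$) together with $s(1-s)=-(1-s)$, whence $a^{m/2}(1-s)=-(1-s)$ and therefore $a^i(1-s)=-a^{i-m/2}(1-s)$ for $i\ge m/2$. This reduction shows at once that the elements $t_ja^ik\,(1-s)$ with $0\le i<m/2$ span $(RG)f$. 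For uniqueness I would expand $(1-s)=1-a^{m/2}$, so that $t_ja^ik\,(1-s)=t_ja^ik-t_ja^{i+m/2}k$, and observe that the group elements $t_ja^ik$ and $t_ja^{i+m/2}k$ occurring for $0\le i<m/2$ are pairwise distinct in $G$; hence the coefficients of any vanishing $R$-linear combination separate in the free module $RG$, giving linear independence. Finally, writing $x_{ij}=\sum_k x_{ijk}k\in RK$ and pushing the central factors $x_{ij},a^i,(1-s)$ through $t_j$ identifies the bracketed expression times $(1-s)$ with $\sum_{j,i,k}x_{ijk}\,t_ja^ik\,(1-s)$, so the basis just exhibited forces the $x_{ij}$ to be uniquely determined by $\alpha$.

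For item 2 the two structural facts I need about the canonical involution are that it fixes $\centro(G)$ pointwise, hence restricts to the identity on $R\centro(G)$, and that $t_j^*=st_j$ for $j\ge 2$ (as $t_j\notin\centro(G)$) while $t_1^*=t_1=1$. Writing $\alpha=\sum_{j=1}^4 c_jt_j$ with $c_j=\left(\sum_i x_{ij}a^i\right)(1-s)\in R\centro(G)$ central, anti-multiplicativity gives $\alpha^*=\sum_j t_j^*c_j^*=\sum_j t_j^*c_j$. The term $j=1$ contributes $c_1$, while for $j\ge 2$ one gets $t_j^*c_j=st_jc_j=sc_jt_j$, and $sc_j=-c_j$ because $s(1-s)=-(1-s)$. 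This yields $\alpha^*=c_1-\sum_{j=2}^4 c_jt_j$, which is exactly the asserted formula after substituting the definition of $c_j$. Item 3 is then immediate from the uniqueness in item 1: $\alpha=\alpha^*$ forces $x_{ij}=-x_{ij}$ for every $j\ge 2$, and since $2\in\uni(R)$ and $x_{ij}\in RK$ this is equivalent to $x_{ij}=0$ for all $j\ge 2$.

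For item 4, recall that a projection is a $*$-invariant idempotent. By item 3 the $*$-invariant elements of $(RG)f$ are precisely those of the form $d(1-s)$ with $d=\sum_{i=0}^{m/2-1}x_{i1}a^i$. For such an element, $(1-s)^2=2(1-s)$ gives $\left(d(1-s)\right)^2=d^2(1-s)^2=2d^2(1-s)$, so idempotency is the identity $2d^2(1-s)=d(1-s)$, that is, $d=2d^2$ read inside $(RG)f$ (concretely, reduce $2d^2$ to the range $0\le i<m/2$ via $a^{m/2}(1-s)=-(1-s)$ and invoke the uniqueness of item 1, where $(1-s)/2=f$ is the unit). Combining the $*$-invariance condition with the idempotency condition gives the stated description of $\proj((RG)f)$.

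The genuinely routine parts are the bookkeeping with central factors and the geometric reductions of $a$-powers; the two steps that need real care are the linear-independence argument in item 1 (making precise that the relevant group elements are distinct so that coefficients in the free module $RG$ separate) and the correct use of the canonical involution in item 2, namely that it is the identity on $R\centro(G)$ and that $s$ is absorbed as a sign through $s(1-s)=-(1-s)$. Keeping the reduction of $a$-exponents modulo $m/2$ — equivalently the identification $s\leftrightarrow -1$ on $(RG)f$ — consistent throughout is precisely what makes the idempotency condition in item 4 collapse to the compact form $d=2d^2$.
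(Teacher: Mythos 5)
Your proposal is correct and follows essentially the same route as the paper: both decompose $(RG)f$ over the basis coming from the factorization $g=t_ja^ik$ and the reduction $s(1-s)=-(1-s)$ for item 1, use $t_j^*=st_j$ together with the $*$-invariance of $R\centro(G)$ for item 2, and derive items 3 and 4 from uniqueness and $(1-s)^2=2(1-s)$. Your explicit remark that $d=2d^2$ must be read after reducing exponents modulo $m/2$ via uniqueness is in fact slightly more careful than the paper's own wording.
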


\begin{proof}
\begin{enumerate}

\item
First we prove the existence. Since $K$
is a transversal for $\langle a \rangle$ in $\mathcal{Z}(G)$, then

    \begin{equation}\label{transversal2}
    K\tau=\{k t_j \tq k\in K   , 1\leq j \leq 4\} \text{ is a transversal for  $\langle a \rangle$ in $G$}.
    \end{equation}

    Let $\alpha\in (RG)f$. Then there is $n\in \BN$ and $k_1,k_2,\dots,k_n\in K$ such that for each $1\leq j\leq 4$, $0\leq i\leq \frac{m}{2}-1$, $1\leq \E\leq n$,
		$0\leq\delta \leq 1$	
		there is a $\gamma_{ij\E\delta}\in R$ such that

 \begin{align*}
 \alpha&=\left(\sum_{j,i,\E, \delta} \gamma_{ji\E\delta}k_\E t_ja^is^{\delta}\right)(1-s)\\
 		&=\left[ \left(\sum_{j,i,\E} \gamma_{ji\E 0}k_\E t_ja^i\right) + \left(\sum_{j,i,\E}\gamma_{ji\E 1}k_\E t_ja^is\right) \right](1-s)\\
		&=\left[ \left(\sum_{j,i,\E} \gamma_{ji\E 0}k_\E t_ja^i\right) - \left(\sum_{j,i,\E}\gamma_{ji\E 1}k_\E t_ja^i\right) \right](1-s)\\
		&=\left(\sum_{j,i,\E} \left(\gamma_{ji\E 0}-\gamma_{ji\E 1}\right)k_\E t_ja^i\right)(1-s)\\
		&=\coe\sum_{j=1}^4\left(\sum_{i,\E} \left(\gamma_{ji\E 0}-\gamma_{ji\E 1}\right)k_\E a^i\right)t_j\cod(1-s)\\
		&=\coe\sum_{j=1}^4\left(\sum_{i=0}^{\frac{m}{2}-1}\left(\sum_{\E=1}^n \left(\gamma_{ji\E 0}-\gamma_{ji\E 1}\right)k_\E \right)a^i\right)t_j\cod(1-s)\\
		&=\coe\sum_{j=1}^4\left(\sum_{i=0}^{\frac{m}{2}-1}x_{ij}a^i \right)t_j\cod(1-s),\\
		 		,
 \end{align*}

which proves the existence.

To prove uniqueness, note that
$$\alpha=\left[\sum_{j=1}^4\pae\sum_{i=0}^{\frac{m}{2}-1}x_{ij}a^i\pad t_j\right](1-s)=\left[\sum_{j=1}^4\pae\sum_{i=0}^{\frac{m}{2}-1}x_{ij}'a^i\pad t_j\right](1-s)$$

implies

\begin{multline*}
\left[\sum_{j=1}^4\pae\sum_{i=0}^{\frac{m}{2}-1}x_{ij}a^i\pad t_j\right]-\left[\sum_{j=1}^4\pae\sum_{i=0}^{\frac{m}{2}-1}x_{ij}a^i\pad t_j\right]s =\\
=\left[\sum_{j=1}^4\pae\sum_{i=0}^{\frac{m}{2}-1}x_{ij}'a^i\pad t_j\right]-\left[\sum_{j=1}^4\pae\sum_{i=0}^{\frac{m}{2}-1}x_{ij}'a^i\pad t_j\right]s.
\end{multline*}

Since $\che t_ja^is^\E:1\leq j\leq4, 0\leq i\leq \frac{m}{2}-1,\E=0,1\chd$ is a transversal for $K$ in $G$, we obtain $x_{ij}=x'_{ij}$ for all $i,j$.

\item

Note that  $\supp(x_{ij}a^i(1-s))$ is central hence invariant by $*$ for all $i,j$. Then

\begin{align*}
\alpha^*&= \left(\left[\sum_{j=1}^4\pae\sum_{i=0}^{\frac{m}{2}-1}x_{ij}a^i\pad t_j\right](1-s)\right)^*\\
&= \pae\left[\sum_{i=0}^{\frac{m}{2}-1}x_{i1}a^i +\sum_{j=2}^4\pae\sum_{i=0}^{\frac{m}{2}-1}x_{ij}a^i\pad t_j\right](1-s)\pad^*\\
&= \pae\coe\sum_{i=0}^{\frac{m}{2}-1}x_{i1}a^i\cod(1-s)\pad^* +\pae\coe\sum_{j=2}^4\pae\sum_{i=0}^{\frac{m}{2}-1}x_{ij}a^i\pad t_j\cod(1-s)\pad^*\\
&=\coe\sum_{i=0}^{\frac{m}{2}-1}x_{i1}a^i\cod(1-s) +\coe\sum_{j=2}^4\pae\sum_{i=0}^{\frac{m}{2}-1}x_{ij}a^i\pad t_j^*\cod(1-s)\\
&=\coe\sum_{i=0}^{\frac{m}{2}-1}x_{i1}a^i\cod(1-s) +\coe\sum_{j=2}^4\pae\sum_{i=0}^{\frac{m}{2}-1}x_{ij}a^i\pad t_js\cod(1-s)\\
&=\coe\sum_{i=0}^{\frac{m}{2}-1}x_{i1}a^i\cod(1-s) +\coe\sum_{j=2}^4\pae\sum_{i=0}^{\frac{m}{2}-1}x_{ij}a^i\pad (-t_j)\cod(1-s)\\
&=\coe\sum_{i=0}^{\frac{m}{2}-1}x_{i1}a^i\cod(1-s) -\coe\sum_{j=2}^4\pae\sum_{i=0}^{\frac{m}{2}-1}x_{ij}a^i\pad t_j\cod(1-s),\\
&=\left[\sum_{i=0}^{\frac{m}{2}-1}x_{i1}a^i-\sum_{j=2}^4\pae\sum_{i=0}^{\frac{m}{2}-1}x_{ij}a^i\pad t_j\right](1-s).
\end{align*}

\item
It follows from the two previous items.

\item
Let $\alpha\in \Proj((RG)f)$. Then $\alpha=\alpha^*$ implies $\ds\alpha=\sum_{i=0}^{\frac{m}{2}-1}x_{i1}a^i(1-s)=d(1-s)$. Then
$$d(1-s)=\alpha=\alpha^2=d^2(1-s)^2=d^22(1-s).$$
By the uniqueness of item 1, $d=2d^2$. The converse follows from the centrality of $\supp (d(1-s))$.

\end{enumerate}

\end{proof}

\section{Necessary conditions}\label{secnec}

In this section we prove Theorem A. Our main tool will be the following lemma.

\begin{lema}\label{lemagamma}
  Let $R$ be a ring and $G$ be an SLC-group, with the assumptions of Section \ref{secbasic}. Suppose there are elements $\gamma,\tau\in RG$ such that:

  \begin{enumerate}
      \item
      $(1-\gamma^2)\tau(1-s)=0$;

      \item
      $(z-4\inv \gamma)\tau(1-s)\neq 0$ for all $z\in RZ(G)$.
  \end{enumerate}

  Then $RG$ is not $*$-clean.
\end{lema}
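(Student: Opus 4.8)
The plan is to prove the contrapositive by exhibiting a single element that is not $*$-clean. Since $f=\frac{1-s}{2}$ is a central projection with $f^{*}=f$, the map $\pi\colon RG\setad (RG)f$, $\pi(\beta)=\beta f$, is a $*$-preserving ring epimorphism; by Lemma \ref{prop.involucao1ver2}(2) it therefore suffices to show that the corner ring $(RG)f$ is not $*$-clean.

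Two preliminary reductions set the stage. First, by Lemma \ref{lemad1d2}(4) every projection of $(RG)f$ has the form $p=d(1-s)=gf$ where $g:=2d\in R\centro(G)$ satisfies $g^{2}=4d^{2}=2d=g$, i.e. $g$ is a central idempotent; consequently $\epsilon:=1-2g$ is a central element with $\epsilon^{2}=1$. Second, using $1-s=2f$ and that $2\in\uni(R)$, hypothesis (1) is equivalent to $\gamma^{2}\tau f=\tau f$, while hypothesis (2) is equivalent to $\gamma\tau f\neq z\tau f$ for every $z\in R\centro(G)$ (since $z\mapsto 4z$ is a bijection of $R\centro(G)$).

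The test element is $\alpha:=\frac{\gamma+1}{2}f\in (RG)f$. Suppose for contradiction that $\alpha=u+p$ with $u\in\uni((RG)f)$ and $p=gf$ a projection. Then $u=\alpha-gf=\frac{\gamma+\epsilon}{2}f$, and using $f\tau f=\tau f$ together with $\epsilon^{2}=1$ and the centrality of $f$ and $\epsilon$,
\[
u\,(\gamma-\epsilon)\tau f=\frac{(\gamma+\epsilon)(\gamma-\epsilon)}{2}\,\tau f=\frac{\gamma^{2}-1}{2}\,\tau f=\frac{\gamma^{2}\tau f-\tau f}{2}=0,
\]
the final equality being hypothesis (1). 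Multiplying on the left by $u^{-1}$ yields $(\gamma-\epsilon)\tau f=0$, i.e. $\gamma\tau f=\epsilon\tau f$ with $\epsilon\in R\centro(G)$, contradicting hypothesis (2). Hence $\alpha-p$ is a unit for no projection $p$, so $\alpha$ is not $*$-clean, and $RG$ is not $*$-clean.

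The essential step is the choice of $\alpha=\frac{\gamma+1}{2}f$: its point is that subtracting an arbitrary projection $gf$ leaves $u=\frac{\gamma+\epsilon}{2}f$ with $\epsilon=1-2g$ self-inverse, which is exactly what makes the factorization $(\gamma+\epsilon)(\gamma-\epsilon)=\gamma^{2}-1$ match hypothesis (1) and collapse against $\tau f$. Everything else---the description of the projections, the two reformulations via $1-s=2f$, and the commutations of $f$ and $\epsilon$ past $\gamma$---is routine bookkeeping, so I expect the only genuine design choice to be identifying this element.
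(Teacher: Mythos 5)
Your proof is correct and follows essentially the same route as the paper: you reduce to the corner ring $(RG)f$, test the same element $\tfrac{1+\gamma}{2}f=4^{-1}(1+\gamma)(1-s)$, describe the candidate projections via Lemma \ref{lemad1d2}(4), and derive the contradiction by multiplying the unit $u$ against a nonzero element killed by hypothesis (1) (your $(\gamma-\epsilon)\tau f$ is, up to the unit $-2$, exactly the paper's auxiliary element $v$). The only cosmetic differences are the idempotent bookkeeping via $\epsilon=1-2g$ and using Lemma \ref{prop.involucao1ver2}(2) instead of Lemma \ref{slimpezasomadireta} for the reduction to $(RG)f$.
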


\begin{proof}
Suppose $RG$ is $*$-clean. By Lemma \ref{slimpezasomadireta}, $(RG)f$ is also $*$-clean.

Let $\gamma,\tau \in RG$ be as in the statement. Then  $(1-\gamma^2)\tau(1-s)=0$.

Consider $h=4^{-1}(1+\gamma)(1-s)\in (RG)f$. By hypothesis, there is a unit $u\in (RG)f$ and a projection $p\in (RG)f$ such that $h=u+p$.

By Lemma \ref{lemad1d2}, item 4, there is $d\in RZ(G)$ such that $p= d(1-s)$, with  $2d^2(1-s)=d(1-s)$.

Hence
$$u=h-p=\left(\left(4^{-1}-d\right) + 4^{-1}\gamma \right) (1-s).$$

Let $$v:=\left(\left(4^{-1}-d\right) - 4^{-1}\gamma \right)\tau (1-s). $$

By hypothesis, $v\neq 0$ hence

\begin{align*}
uv&=2\left(\left(16\inv -2\inv d + d^2\right) - 16\inv \gamma^2)\right)\tau(1-s)\\
&=8\inv (1-\gamma^2)\tau(1-s)=0,
\end{align*}
which is a contradiction, since $u$ is a unit and $v\neq0$.

It follows that $(RG)f$ is not $*$-clean, which implies $RG$ isn't either, by Lemma \ref{slimpezasomadireta}.
\end{proof}

Now we are ready to prove the first part of Theorem A.

\begin{lema}
Let $G$ be an SLC-group and $R$ a ring with the assumptions of Section \ref{secbasic}. If $RG$ is $*$-clean then $G=Q_8\times A$, with $A$ abelian, such that

\begin{enumerate}
    \item
    The group $A$ has no elements of order 4.

    \item
    If $p$ is a prime and there is $n\in\BN$ such that $p$ divides $2^n+1$ then $A$ has no elements of order $p$.
\end{enumerate}
\end{lema}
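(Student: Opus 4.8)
The plan is to argue by contraposition: assuming $RG$ is $*$-clean, I will show that every forbidden feature of $G$ lets me build a pair $(\gamma,\tau)\in RG\times RG$ satisfying the two hypotheses of Lemma \ref{lemagamma}, contradicting $*$-cleanness. By Theorem \ref{indecomp} I first write $G=D\times A$ with $A$ abelian and $D$ one of $D_1,\dots,D_5$, and I keep the notation $x,y,a,b,c,s$ of that theorem. The whole proof then reduces to exhibiting, for each $G$ outside the target class, a non-central element (or a ring element) whose square is controlled modulo $(1-s)$.

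The recurring device is the following. If $t\in G$ is non-central with $t^2=w\in\centro(G)$ and $s\notin\langle w\rangle$, then $\gamma=t$ and $\tau=\sum_{i=0}^{|w|-1}w^i$ satisfy Lemma \ref{lemagamma}. Condition $(1)$ holds because $(1-t^2)\tau=(1-w)\sum_i w^i=1-w^{|w|}=0$. For condition $(2)$, the group elements occurring in $\tau(1-s)=\sum_i w^i-\sum_i w^i s$ are pairwise distinct exactly because $s\notin\langle w\rangle$, so $\tau(1-s)\neq0$; since $\tau(1-s)\in R\centro(G)$ is central, $t\,\tau(1-s)$ lies entirely in the non-central coset $t\,\centro(G)$, and by Lemma \ref{lemad1d2}, item 1, its coefficient $-4^{-1}\tau(1-s)$ (note $4\in\uni(R)$, as $2\in\uni(R)$) cannot be cancelled by any central $z$.

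With this tool I discard every $D\neq Q_8$. For $D_1$ and $D_3$ one has $x^2=1$, so $t=x$, $w=1$ works. For $D_4$ (resp. $D_5$) the relation $y^2=b$ (resp. $x^2=b$) satisfies $s\in\langle a\rangle$ while $\langle a\rangle\cap\langle b\rangle=1$, hence $s\notin\langle b\rangle$, and $t=y$ (resp. $t=x$), $w=b$ works for all parameters $m,m_i$. For $D_2$ with $m\geq4$ the generators $x,y,xy$ all have $s\in\langle t^2\rangle$, but since $\tfrac{m}{2}$ is even one may solve $2+\tfrac{m}{2}+2i\equiv 0\pmod m$, producing a non-central involution $xya^{i}$ (because $(xya^{i})^2=a^{\,2+m/2+2i}=1$), to which the device applies with $w=1$. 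Only $D_2$ with $m=2$, i.e. $G=Q_8\times A$ with $x^2=y^2=s$, survives, for there every non-central element squares to $s$.

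Assuming now $G=Q_8\times A$, I treat the two arithmetic conditions on $A$. If $A$ had an element $g$ of order $4$, then $g^2\in A$ and $s\notin A$ give $g^2\neq s$, so $t=gx$ has $t^2=g^2s$ of order two with $s\notin\langle g^2s\rangle$, and the device applies. The delicate case, and the main obstacle, is condition 2. Suppose $A$ has an element $g$ of order $p$ with $p\mid 2^n+1$. Lemma \ref{propexmau}, item 2, furnishes central $\alpha,\beta\in R\langle g\rangle$ with $(\alpha^2+\beta^2+g^{2^n})(g-1)=0$; I set $\gamma=g^{-2^{n-1}}(\alpha x+\beta y)$ and $\tau=g-1$. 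Since $x,y$ anticommute on $(RG)f$ and square to $s$, the cross terms vanish against $(1-s)$ and one finds $(1-\gamma^2)(1-s)=\bigl(1+g^{-2^n}(\alpha^2+\beta^2)\bigr)(1-s)$; multiplying by $g-1$ and substituting $(\alpha^2+\beta^2)(g-1)=-g^{2^n}(g-1)$ collapses it to $0$, which is condition $(1)$. For condition $(2)$, the $x$- and $y$-coset parts of $\gamma\tau(1-s)$ are $-4^{-1}g^{-2^{n-1}}\alpha(g-1)(1-s)$ and $-4^{-1}g^{-2^{n-1}}\beta(g-1)(1-s)$; if both vanished, multiplying by $\alpha$ and $\beta$ and adding would give $(\alpha^2+\beta^2)(g-1)(1-s)=0$, whence $(g-1)(1-s)=0$, which is false. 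Thus Lemma \ref{lemagamma} applies once more and $RG$ is not $*$-clean, the desired contradiction. The hardest point throughout is fitting together the sum-of-two-squares identity of Lemma \ref{propexmau} with the localizing choice $\tau=g-1$ so that both conditions hold simultaneously in this odd-prime case.
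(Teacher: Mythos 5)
Your proof is correct and follows essentially the same route as the paper: the same reduction to Lemma \ref{lemagamma}, the same case analysis over the types $D_1,\dots,D_5$ of Theorem \ref{indecomp}, and effectively the same witnesses $(\gamma,\tau)$ in every case (e.g.\ your $g^{-2^{n-1}}$ equals the paper's $g^{2^{n-1}+1}$ since $g^{2^n+1}=1$, and your $\tau=1+g^2s$ agrees with the paper's $1-g^2$ after multiplying by $1-s$). The only difference is presentational: you package the repeated computation into a single reusable device ($t$ non-central, $t^2=w$ central, $s\notin\langle w\rangle$), which is a tidy unification but not a different argument.
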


\begin{proof}
   We will use Lemma \ref{lemagamma} several times to eliminate possibilities, defining different  $\gamma$ and  $\tau$ for each case. In every case we use the transversal $\{t_1,t_2,t_3,t_4\}$ of $G$ over $Z(G)$  as being $\{1,x,y,xy\}$ (\cf Theorem \ref{indecomp}).

By Theorem \ref{indecomp}, $G$ is isomorphic to  $D_{i_0}\times A$, for some $i_0\in\{1,2,3,4,5\}$.

Suppose $i_0=1$. Take $\gamma=y$ and $\tau=1$. Then
\begin{align*}
(1-\gamma^2)\tau(1-s)&=(1-y^2)(1-s)=(1-1)(1-s)=0.
\end{align*}

By Lemma \ref{lemagamma}, there is $z\in RZ(G)$ such that
\begin{align*}
    0&=(z-4\inv \gamma)\tau(1-s)=(z-4\inv y)(1-s).
\end{align*}

It follows from the uniqueness part of Lemma \ref{lemad1d2} that $-4\inv=0$, a contradiction.

Suppose now $i_0\in\{3,4,5\}$. Take $\gamma=y$ and $\tau=\sum_{i=0}^{\frac{r}{2} - 1}y^{2i}$, where $r\geq 4$ is the order of $y$.

Then

\begin{align*}
(1-\gamma^2)\tau(1-s)&=(1-y^2)\left(\sum_{i=0}^{\frac{r}{2} - 1}y^{2i}\right)(1-s)\\
&=\left(\sum_{i=0}^{\frac{r}{2} - 1}y^{2i}-\sum_{i=1}^{\frac{r}{2}}y^{2i}\right)(1-s)\\
&=(1-y^r)(1-s)=(1-1)(1-s)=0.
\end{align*}

By Lemma \ref{lemagamma} there is $z\in RZ(G)$ such that
\begin{align*}
    0&=(z-4\inv \gamma)\tau(1-s)\\
    &=(z-4\inv y)\left(\sum_{i=0}^{\frac{r}{2} - 1}y^{2i}\right)(1-s)\\
    &=\left[z\left(\sum_{i=0}^{\frac{r}{2} - 1}y^{2i}\right)      + y\left(-4\inv\sum_{i=0}^{\frac{r}{2} - 1}y^{2i}\right)\right](1-s).
\end{align*}

Note that $z\sum_{i=0}^{\frac{r}{2} - 1}y^{2i}\in RZ(G)$ and, by construction, the powers $y^{2i}\neq 1$, for $1\leq i\leq \frac{r}{2} - 1$, belong to $\langle b \rangle$ or $\langle c \rangle$, never to $\langle a \rangle$. Then $4\inv\sum_{i=0}^{\frac{r}{2} - 1}y^{2i}$ belongs to $RK$. It follows that, by the uniqueness part of Lemma \ref{lemad1d2},
$$-4\inv\sum_{i=0}^{\frac{r}{2} - 1}y^{2i}=0,$$
which is a contradiction, since each power $y^{2i}$, for $0\leq i \leq \frac{r}{2} - 1$, is a different element of the group $G$.

That means $i_0=2$. Suppose $m\geq 4$. Take $\gamma=xya^{\frac{m-4}{4}}$ e $\tau=1$. We have
\begin{align*}
(1-\gamma^2)\tau(1-s)&=\left(1-\left(xya^{\frac{m-4}{4}}\right)^2\right)(1-s)\\
&= \left(1-x^2y^2sa^{\frac{m-4}{2}}\right)(1-s)\\
&= \left(1-a^2sa^{\frac{m-4}{2}}\right)(1-s)\\
&= \left(1-sa^{\frac{m}{2}}\right)(1-s)\\
&= \left(1-s^2\right)(1-s)=(1-1)(1-s)=0.
\end{align*}

By Lemma \ref{lemagamma}, there is $z\in RZ(G)$ such that
$$
    0=\left(z-4\inv \gamma\right)\tau(1-s)=\left(z+ xy\left(-4\inv a^{\frac{m-4}{4}}\right) \right) (1-s).
$$

By the uniqueness part of Lemma \ref{lemad1d2}, we have $-4\inv a^{\frac{m-4}{4}}  = 0$, a contradiction.

Then $i_0=2$ and $m=2$, which means $G\simeq Q_8\times A$, for some abelian group $A$.

For the proof of item 1, suppose $A$ has an element of order 4, which we denote by $g\in A$. Take $\gamma=xg$ and $\tau=1-g^2$. Then
\begin{align*}
(1-\gamma^2)\tau(1-s)&=(1-(xg)^2)(1-g^2)(1-s)\\
&=(1-x^2g^2)(1-g^2)(1-s)\\
&=(1+g^2)(1-g^2)(1-s)\\
&=(1-g^4)(1-s)=0(1-s)=0.
\end{align*}

By Lemma \ref{lemagamma}, there is $z\in RZ(G)$ such that
\begin{align*}
    0&=(z-4\inv \gamma)\tau(1-s)\\
    &=(z-4\inv xg)(1-g^2)(1-s)\\
    &=\left[z(1-g^2) + x \left( -4\inv (1-g^2)\right)       \right](1-s).
\end{align*}

By the uniqueness part of Lemma \ref{lemad1d2}, we have $-4\inv (1-g^2)= 0$, which is a contradiction, since $g$ has order 4.

Finally, for the proof of item 2, suppose  $g\in A$ has order $p$ as in the statement.  Since $S=RA,p,g$ satisfy the hypothesis of Lemma \ref{propexmau}, then there are $\alpha,\beta\in RA\leq Z(RG)$ such that

\begin{equation}\label{eqp2n}
\left(\alpha^2+\beta^2+g^{2^n}\right)(g-1)=0.
\end{equation}

Take $\gamma=\alpha g^{2^{n-1}+1}x+\beta g^{2^{n-1}+1}y$ and $\tau=g-1$.

Then
\begin{align*}
(1-\gamma^2)\tau(1-s)&=\left[ 1 -  \left(\alpha^2 g^{2^n+2}x^2+\alpha\beta g^{2^n+2}xy+\alpha\beta g^{2^n+2}yx+\beta^2 g^{2^n+2}y^2 \right) \right](g-1)(1-s)\\
&=\left[ 1+g(\alpha^2+\beta^2)\right](g-1)(1-s)\\
&=\left[ g^{2^n+1}+g(\alpha^2+\beta^2)\right](g-1)(1-s)\\
&=g\left[g^{2^n}+\alpha^2+\beta^2\right](g-1)(1-s)=0.
\end{align*}

By Lemma \ref{lemagamma}, there is $z\in RZ(G)$ such that
\begin{align*}
    0&=(z-4\inv \gamma)\tau(1-s)\\
    &=\left[z-4\inv  \left( \alpha x g^{2^{n-1}+1} + \beta y g^{2^{n-1}+1} \right)  \right](g-1)(1-s)\\
    &=\left[z(g-1) - 4\inv \alpha  g^{2^{n-1}+1} (g-1)x - 4\inv \beta  g^{2^{n-1}+1}(g-1)y  \right](1-s).
\end{align*}

The uniqueness part of Lemma \ref{lemad1d2} implies

$$4\inv \alpha  g^{2^{n-1}+1} (g-1)=4\inv \beta  g^{2^{n-1}+1}(g-1)=0$$
hence $\alpha(g-1)=\beta(g-1)=0$.

Combining the above with \eqref{eqp2n}, we obtain $1-g^{2^n}=g^{2^n}(g-1)=0$, which is a contradiction since $2^n\equiv -1 \mod p$.
\end{proof}

Then Theorem A follows from the next Lemma, inspired by \cite[Theorem 3.8]{GCL15}.

\begin{lema} Let $R$ be a ring and $G=Q_8\times A$, with $A$ being an abelian group, and with the assumptions of Section \ref{secbasic}. If the equation $X^2+Y^2+Z^2=1$ has a solution in $R$ then $RG$ is not $*$-clean.
\end{lema}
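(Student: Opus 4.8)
The plan is to apply Lemma \ref{lemagamma} to a pair $\gamma,\tau\in RG$ manufactured directly from a solution of the sum-of-three-squares equation. First I would record the relevant structure: since $Q_8$ is the SLC-group $D_2$ with $m=2$, we have $a=s$, $\centro(G)=\langle s\rangle\times A$ (so $RZ(G)=RA\oplus RA\,s$ and $K=A$), and $\{1,x,y,xy\}$ is a transversal over $\centro(G)$ with $x^2=y^2=(xy)^2=s$ and $yx=xys$. With this in hand, Lemma \ref{lemagamma} reduces the task to exhibiting $\gamma,\tau$ satisfying its two hypotheses, after which it concludes directly that $RG$ is not $*$-clean.

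Next, writing $(r_1,r_2,r_3)$ for a solution of the displayed equation, so that $1+r_1^2+r_2^2+r_3^2=0$ (the $r_i\in R\subseteq RZ(G)$ being central in $RG$), I would set
\[
\gamma=r_1x+r_2y+r_3(xy),\qquad \tau=1.
\]
The core of hypothesis (1) is the identity $\gamma^2(1-s)=-(r_1^2+r_2^2+r_3^2)(1-s)$. To obtain it I would expand $\gamma^2$: the three diagonal terms contribute $(r_1^2+r_2^2+r_3^2)s$, since $x^2=y^2=(xy)^2=s$, while each of the three cross brackets collapses to a multiple of $1+s$ — namely $xy+yx=xy(1+s)$, $x(xy)+(xy)x=y(1+s)$ and $y(xy)+(xy)y=x(1+s)$, using $yx=xys$ and the $Q_8$ products $x(xy)=sy$, $(xy)x=y$, $y(xy)=x$, $(xy)y=sx$. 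Multiplying by $(1-s)$ kills every cross term because $(1+s)(1-s)=0$, and sends $s(1-s)$ to $-(1-s)$; hence $(1-\gamma^2)(1-s)=(1+r_1^2+r_2^2+r_3^2)(1-s)=0$, which is hypothesis (1).

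Finally I would verify hypothesis (2). For $z\in RZ(G)=RA\oplus RA\,s$, the relation $s(1-s)=-(1-s)$ gives $z(1-s)=w(1-s)$ for some $w\in RA$, so
\[
(z-4\inv\gamma)\tau(1-s)=\bigl[w-4\inv r_1x-4\inv r_2y-4\inv r_3(xy)\bigr](1-s).
\]
By the uniqueness part of Lemma \ref{lemad1d2} (with $m=2$ the inner $a$-sums are trivial and all four coefficients lie in $RK=RA$), this vanishes only if $w=0$ and $4\inv r_1=4\inv r_2=4\inv r_3=0$; since $4\in\uni(R)$ that would force $r_1=r_2=r_3=0$, impossible because $1+r_1^2+r_2^2+r_3^2=0$. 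Thus the expression is nonzero for every $z$, hypothesis (2) holds, and Lemma \ref{lemagamma} yields that $RG$ is not $*$-clean. The only delicate point is the bookkeeping of the $Q_8$-products needed to see that all cross terms assemble into multiples of $1+s$; once that is established, both hypotheses follow immediately, the first from $(1+s)(1-s)=0$ and the second from uniqueness of the transversal expansion.
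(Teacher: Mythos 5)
Your proposal is correct and follows essentially the same route as the paper: both invoke Lemma \ref{lemagamma} with $\tau=1$ and a $\gamma$ built from the solution of $X^2+Y^2+Z^2+1=0$ as a combination of $x$, $y$, $xy$, verify hypothesis (1) via the collapse of the cross terms into multiples of $1+s$, and derive hypothesis (2) from the uniqueness statement of Lemma \ref{lemad1d2}. The only (immaterial) difference is that the paper takes $\gamma=2^{-1}(a_1x+a_2y+a_3xy)(1-s)$, i.e.\ your $\gamma$ multiplied by the idempotent $f$, which changes nothing once everything is multiplied by $(1-s)$.
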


\begin{proof}
    Let $a_1,a_2,a_3\in R$ be such that
    $$a_1^2+a_2^2+a_3^2+1=0.$$
Note that $a_i$ may not be all equal to zero.
Suppose $RG$ is $*$-clean. We are going to use Lemma \ref{lemagamma} to find a contradiction. We recall $G$ is of type 2 with $D_2=Q_8$, hence $x^2=y^2=(xy)^2=s$ and $xy=yxs$. Let $$\gamma:=2\inv (a_1x+a_2y+a_3xy)(1-s)\in RG$$
and $\tau=1\in RG$.

Then
\begin{align*}
    \gamma^2&=4\inv \left((a_1x+a_2y+a_3xy)^2\right)(1-s)^2\\
    &=4\inv \left[(a_1^2+a_2^2+a_3^2)s\right]2(1-s)\\
    &=4\inv \cdot 2[(-1)s](1-s)=2\inv (1-s).
\end{align*}

Hence

$$\left(1-\gamma^2\right)\tau(1-s)=2\inv (1+s)(1-s)=0.$$

By Lemma \ref{lemagamma} there is $z\in RZ(G)$ such that
$$0=(z-4\inv \gamma)\tau(1-s)=\left(z-4\inv  a_1x -4\inv a_2y -4\inv a_3xy\right)(1-s).$$

Now the uniqueness part of Lemma \ref{lemad1d2} implies $a_1=a_2=a_3=0$, which is a contradiction.

\end{proof}

We finish this section with a simple corollary.

\begin{cor}
  If $G$ is an SLC-group with canonical involution $*$, $\BC G$ is not $*$-clean.
\end{cor}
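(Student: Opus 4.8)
The plan is to apply Theorem A to the single ring $R=\BC$, so almost all the work is already done. First I would verify that $\BC$ meets the standing hypotheses of Section~\ref{secbasic}: it is a commutative unital ring, and $2\in\uni(\BC)$ because $\BC$ is a field in which $2\neq 0$. Hence Theorem A (equivalently, the two lemmas that constitute it) applies verbatim to $\BC G$ whenever $G$ is an SLC-group equipped with its canonical involution $*$.

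Then I would argue by contradiction. Assume $\BC G$ is $*$-clean. Among the conclusions of Theorem A is that the equation $X^2+Y^2+Z^2+1=0$ has no solution in the coefficient ring. However, in $\BC$ the triple $(X,Y,Z)=(i,0,0)$ satisfies $i^2+0^2+0^2+1=-1+1=0$, so a solution does exist. This contradiction shows $\BC G$ cannot be $*$-clean. Equivalently, and without invoking the full strength of Theorem A, I could first use the structural lemma of this section to conclude $G\cong Q_8\times A$ with $A$ abelian, and then feed the solution $(i,0,0)$ into the final lemma, which immediately yields that $\BC G$ is not $*$-clean.

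I expect no genuine obstacle here: the entire argument rests on machinery already built for Theorem A, and the only fresh ingredient is the elementary observation that $-1$ is a square, hence a sum of squares, in $\BC$. The conclusion is uniform over all SLC-groups precisely because the obstruction isolated by Theorem A, namely the solvability of $X^2+Y^2+Z^2+1=0$, is a property of the coefficient ring $\BC$ alone and is insensitive to the particular group $G$.
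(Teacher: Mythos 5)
Your proposal is correct and is essentially identical to the paper's own proof: both apply Theorem A and observe that $(X,Y,Z)=(i,0,0)$ solves $X^2+Y^2+Z^2+1=0$ in $\BC$, contradicting the conclusion of Theorem A if $\BC G$ were $*$-clean. No issues.
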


\begin{proof}
    Since the equation $X^2+Y^2+Z^2+1=0$ has a solution in $\BC$, namely $(X,Y,Z)=(i,0,0)$, then the result follows from Theorem A.
\end{proof}

\section{Sufficient conditions}\label{secsuf}

In this section we investigate sufficient conditions for clean group rings in the pattern of Theorem A to be $*$-clean. Before our results, we present the theorem that inspired this work.

\begin{teo}\label{teocgl}\cite[Theorem 3.8]{GCL15}
    Let $R$ be a unital commutative ring such that $2\in\uni(R)$  and let $*$ be the canonical involution on the SLC-group $Q_8$ (which coincides with the inversion). Then

    \begin{enumerate}
        \item
        If $RQ_8$ is $*$-clean then $RQ_8$ is clean and the equation $X^2+Y^2+Z^2+1=0$ has no solutions in $R$.

        \item
        If $R$ is local,  $RQ_8$ is clean and the equation $X^2+Y^2+Z^2+1=0$ has no solutions in $R$, then $RQ_8$ is $*$-clean.
    \end{enumerate}
\end{teo}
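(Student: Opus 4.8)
The plan is to prove the two implications separately, the forward one being essentially formal and the converse requiring the central splitting $RQ_8=(RQ_8)e\oplus(RQ_8)f$, where $s$ is the unique central element of order $2$ of $Q_8$ and $e=\frac{1+s}{2}$, $f=\frac{1-s}{2}$. For item 1 no new work is needed: every projection is an idempotent, so a $*$-clean decomposition $\alpha=u+p$ is already a clean decomposition, giving that $RQ_8$ is clean; and the claim that $X^2+Y^2+Z^2+1=0$ has no solution in $R$ is exactly the contrapositive of the last lemma of Section~\ref{secnec} (equivalently Theorem~A) applied with trivial abelian factor, since a solution there produces an explicit non-$*$-clean element of $(RQ_8)f$.

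For item 2, write $RQ_8=(RQ_8)e\oplus(RQ_8)f$. Since $e,f$ are central projections, the involution restricts to each summand, so by Lemma~\ref{slimpezasomadireta} it suffices to show each is $*$-clean, and by Proposition~\ref{limpezasomadireta.inducao} the hypothesis that $RQ_8$ is clean descends to each summand. On $(RQ_8)e$ the element $s$ acts as $1$, so $(RQ_8)e\cong R(Q_8/\langle s\rangle)\cong R(C_2\times C_2)$, and the canonical involution becomes the identity there (as $g^*=sg\equiv g$ modulo $s$). On a commutative ring with the identity involution every idempotent is a projection, so here ``clean'' and ``$*$-clean'' coincide; hence $(RQ_8)e$ is $*$-clean.

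The substance is the component $(RQ_8)f$. With $s$ acting as $-1$ and $2\in\uni(R)$, this is the quaternion algebra $H_R=\left(\tfrac{-1,-1}{R}\right)=R\oplus Ri\oplus Rj\oplus Rk$ (take $i=xf$, $j=yf$, $k=xyf$), the canonical involution being quaternion conjugation $\overline{a+bi+cj+dk}=a-bi-cj-dk$. Its symmetric elements are exactly the scalars, so by Lemma~\ref{lemad1d2}(4) together with $R$ local the only projections are $0$ and $f=1_{H_R}$; thus an element is $*$-clean iff it admits a clean decomposition whose idempotent part lies in $\{0,1\}$. I would therefore show that, under the hypotheses, \emph{every idempotent of $H_R$ is trivial}, so that every clean decomposition is automatically a $*$-clean one. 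Expanding $\epsilon^2=\epsilon$ for $\epsilon=a+bi+cj+dk$ yields the scalar equation $a^2-(b^2+c^2+d^2)=a$ and the vector equation $(2a-1)(bi+cj+dk)=0$. If $\epsilon$ were non-trivial, reduction modulo the maximal ideal $\mathfrak m$ (where $\mathfrak m H_R=J(H_R)$, because $H_R/\mathfrak m H_R$ is semisimple and the radical contains no nonzero idempotent) shows $\bar\epsilon$ is a non-trivial, hence non-scalar, idempotent of $H_{R/\mathfrak m}$, so at least one of $b,c,d$ is a unit of $R$. The vector equation then forces $2a-1=0$, i.e. $a=\tfrac12$ and $b^2+c^2+d^2=a^2-a=-\tfrac14$, whence $(2b,2c,2d)$ solves $X^2+Y^2+Z^2+1=0$ in $R$, contradicting the hypothesis. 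Combining the two summands via Lemma~\ref{slimpezasomadireta} gives that $RQ_8$ is $*$-clean.

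I expect the crux to be exactly this last step, the triviality of the idempotents of $H_R$. The delicate point is that over a local ring with zero divisors the relation $(2a-1)(bi+cj+dk)=0$ does \emph{not} on its own yield $2a-1=0$; it is the passage to the residue field, forcing some coefficient of the pure part to be a genuine unit, that eliminates the zero-divisor ambiguity and pins down $a=\tfrac12$ exactly, converting a hypothetical non-trivial idempotent into an honest solution of the equation. The remaining ingredients—the direct-sum reduction, the identity-involution argument on $(RQ_8)e$, and the identification of $(RQ_8)f$ with $H_R$ and of its projections—are routine given the lemmas already proved.
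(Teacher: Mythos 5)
Your proof is correct, but note that the paper does not actually prove Theorem \ref{teocgl}: it is imported from \cite{GCL15} (the authors only remark that locality is not needed for item 1), so there is no internal proof to compare against, and your write-up is a sound self-contained reconstruction. Item 1 is handled as the paper itself would: cleanness is formal, and the nonexistence of solutions of $X^2+Y^2+Z^2+1=0$ is the contrapositive of the last lemma of Section \ref{secnec} with $A$ trivial (no circularity, since that lemma rests only on Lemmas \ref{lemagamma} and \ref{lemad1d2}). For item 2, the splitting $RQ_8=(RQ_8)e\oplus(RQ_8)f$, the identification of $(RQ_8)e$ with $R(C_2\times C_2)$ carrying the identity involution (where clean and $*$-clean coincide), and the identification of $(RQ_8)f$ with the quaternion algebra $H_R$ with conjugation are all correct; Lemma \ref{slimpezasomadireta} and Proposition \ref{limpezasomadireta.inducao} let you treat the two summands separately, and Lemma \ref{lemad1d2}(4) plus locality of $R$ does give $\proj(H_R)=\{0,1\}$. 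The crux --- triviality of all idempotents of $H_R$ --- also checks out: $\mathfrak{m}H_R\subseteq J(H_R)$ because $H_R$ is module-finite over the local ring $R$ (Nakayama applied to simple $H_R$-modules), so a nontrivial idempotent $\epsilon$ reduces to a nontrivial, hence nonscalar, idempotent of $H_{R/\mathfrak{m}}$, forcing one of $b,c,d$ to be a unit; the vector equation then pins down $a=\tfrac{1}{2}$, and the scalar equation makes $(2b,2c,2d)$ a solution of $X^2+Y^2+Z^2+1=0$, contradicting the hypothesis. This is in the same spirit as the quaternion-algebra analysis in \cite{GCL15} and fits the machinery this paper develops; the only step you should spell out in a final version is the one-line justification of $\mathfrak{m}H_R\subseteq J(H_R)$, which is currently asserted in a parenthesis.
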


We remark that in the original version of the above theorem $R$ is local in both itens, but the proof of the first item does not use locality. Our goal in this section is to generalize Theorem \ref{teocgl} as much as possible to other SLC-groups, possibly adding hypothesis on $R$.

If $A$ is an elementary abelian 2-group, the result follows easily from Section \ref{secgen}, as we can see below. It is worth noting that in this case the canonical SLC involution coincides with the classical one.

\begin{proof}[Proof of Theorem B]
If $RG$ is $*$-clean, it is obviously clean and the equation $X^2+Y^2+Z^2+1=0$ has no solution in $R$ by Theorem A.

Suppose $RG$ is clean and the above equation has no solution in $R$. By definition we have $*_{|P_2}=Id _{P_2}$. Hence the result follows from Theorems \ref{teocgl} and \ref{extensaoc2} for $H=Q_8$.
\end{proof}

The converse of Theorem \ref{teocgl}  may also be easily extended to applying direct sums on the coefficients.

\begin{teo} Let $R$ be a unital commutative ring such that $2\in\uni(R)$ and $R=\oplus_{i=1}^kR_i$ with $R_i$ local for all $i$. If $RQ_8$ is clean and the equation $x^2+y^2+z^2+1=0$ has no solution in any $R_j$ then $RQ_8$ is $*$-clean.
\label{teo.ri.local}
\end{teo}

\begin{proof}
Note that $RQ_8=(\oplus_{i=1}^kR_i)Q_8\simeq \oplus_{i=1}^k(R_iQ_8)$. Then by Proposition \ref{limpezasomadireta.inducao} $R_iQ_8$ is clean hence $*$-clean for all $i$ by Theorem \ref{teocgl}. By applying Lemma \ref{slimpezasomadireta}, we obtain $RQ_8$ is $*$-clean.
\end{proof}

We may also easily obtain a first generalization for semisimple coefficients rings.

\begin{teo} Let $R$ be a unital commutative ring such that $2\in\uni(R)$ and $G=Q_8\times A$, with $A$ being a finite abelian group. If $R$ is semisimple, $|G|\in\uni(R)$, $RG$ is clean and the equation $X^2+Y^2+Z^2+1=0$ has no solution in any of the simple components of $RA$ then $RG$ is $*$-clean.
\label{teo.ra.semissimples}
\end{teo}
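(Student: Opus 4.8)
The plan is to split $RG$ along the simple components of $RA$, turning the problem into finitely many copies of the already-settled case $Q_8$, and then invoke Theorem~\ref{teocgl}. The guiding observation is that $A$ is a \emph{central} factor of $G$, so the canonical involution behaves trivially on the coefficient side $RA$, which lets all the relevant idempotents be carried along by $*$.

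First I would record that $A\subseteq\centro(G)$, so the canonical involution fixes every element of $A$ and hence acts as the identity on $RA\subseteq\centro(RG)$. Since $R$ is semisimple commutative and $|A|\in\uni(R)$ (it divides $|G|$), the ring $RA$ is semisimple commutative, so it is a direct sum of its simple components, which are \emph{fields} $\mathbb{K}_1,\dots,\mathbb{K}_r$, arising from primitive central idempotents $f_1,\dots,f_r$ of $RA$. Because these $f_j$ lie in $RA$, they are fixed by $*$ and are central in $RG$, so they induce a $*$-stable decomposition
$$RG=(RA)Q_8=\bigoplus_{j=1}^{r}\big((RA)f_j\big)Q_8=\bigoplus_{j=1}^{r}\mathbb{K}_jQ_8,$$
on which $*$ restricts, on each summand $\mathbb{K}_jQ_8$, to the canonical involution of $Q_8$ (which for $Q_8$ coincides with inversion) extended $\mathbb{K}_j$-linearly.

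Next I would apply the direct-sum machinery. By Proposition~\ref{slimpezasomadireta.inducao}, $RG$ is $*$-clean if and only if each $\mathbb{K}_jQ_8$ is $*$-clean for its restricted involution. Since $RG$ is clean by hypothesis, Proposition~\ref{limpezasomadireta.inducao} gives that each $\mathbb{K}_jQ_8$ is clean. Each $\mathbb{K}_j$ is a field, hence local, and by hypothesis the equation $X^2+Y^2+Z^2+1=0$ has no solution in $\mathbb{K}_j$ (these being exactly the simple components of $RA$). Thus Theorem~\ref{teocgl}(2) applies to each $\mathbb{K}_jQ_8$ and yields that it is $*$-clean; combined with the reduction, $RG$ is $*$-clean.

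The only genuinely delicate point — and the step I expect to be the main obstacle to state cleanly — is justifying that the decomposition induced by the $f_j$ is compatible with $*$ and that $*$ restricts on each block to the canonical involution of $Q_8$ over $\mathbb{K}_j$; this is precisely what the centrality of $A$ (forcing $*|_{RA}=\Id$) provides, after which everything is a formal application of the two direct-sum propositions and the known $Q_8$ case. A minor bookkeeping remark worth including is that $|G|\in\uni(R)$ forces $\ch(\mathbb{K}_j)\nmid|Q_8|$, so that $\mathbb{K}_jQ_8$ is semisimple and in particular clean; alternatively one may avoid this and simply quote the hypothesis that $RG$ is clean together with Proposition~\ref{limpezasomadireta.inducao}, as done above.
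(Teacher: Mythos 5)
Your proposal is correct and follows essentially the same route as the paper: Maschke's theorem splits $RA$ into fields, $RG\simeq(RA)Q_8$ splits accordingly into blocks $\mathbb{F}_iQ_8$, and the local-ring case of Theorem~\ref{teocgl} finishes each block (the paper packages this last step as Theorem~\ref{teo.ri.local}, which you simply unpack inline). Your explicit check that the central idempotents of $RA$ are $*$-fixed and that $*$ restricts to the canonical involution of $Q_8$ on each block is a welcome piece of care that the paper leaves implicit.
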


\begin{proof}
 First note that $|G|\in\uni(R)$ implies $|A|\in\uni(R)$ hence, since $R$ is semisimple, by Maschke's Theorem we conclude $RA$ is semisimple. That means $RA=\oplus_{i=1}^k\mathbb{F}_i$, with $\mathbb{F}_i$ begin a field for all $i$. Since every field is local,
$$RG\simeq (RA)Q_8\simeq (\oplus_{i=1}^k\mathbb{F}_i)Q_8$$
and by hypothesis the equation $X^2+Y^2+Z^2+1$ has no solution in any of the fields $\mathbb{F}_i$, Theorem \ref{teo.ri.local} implies $RG$ is $*$-clean.

\end{proof}

 For a deeper investigation we need the following result.

\begin{teo}\cite[Corollary 9]{Z10}\label{teoz}
    If $R$ is a commutative local ring such that $J(R)$ is nil and $G$ is a locally finite group, then $RG$ is clean.
\end{teo}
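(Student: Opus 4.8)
The plan is to reduce the statement to the case of a finite group and then to pass through a nil ideal whose quotient is a group algebra over a field.

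First, I would reduce to finite $G$. Since $G$ is locally finite, every $\alpha\in RG$ has finite support and hence lies in $RH$ for the (finite) subgroup $H$ generated by that support. The crucial observation is that the defining equations of a unit ($uu\inv=u\inv u=1$) and of an idempotent ($e^2=e$) persist when the group is enlarged, so that $\uni(RH)\subseteq\uni(RG)$ and $\idemp(RH)\subseteq\idemp(RG)$. Consequently, if $RH$ is clean for every finite subgroup $H\leq G$, then each $\alpha$ is already clean inside some $RH$ and therefore clean in $RG$. This shows it suffices to treat $G$ finite.

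Next, assuming $G$ finite, I would exhibit a nil ideal of $RG$. Put $I=J(R)\cdot RG$, so that
$$RG/I\;\cong\;(R/J(R))G\;=\;kG,\qquad k:=R/J(R),$$
and $k$ is a field because $R$ is commutative and local. I claim $I$ is nil. Indeed, any $\beta\in I$ has finite support with all coefficients in $J(R)$; since $R$ is commutative and $J(R)$ is nil, those finitely many coefficients generate a finitely generated nil ideal $N$ of $R$, which is therefore nilpotent, say $N^m=0$. Because $R$ is central in $RG$, every coefficient appearing in $\beta^m$ is a product of $m$ elements of $N$, hence lies in $N^m=0$, so $\beta^m=0$.

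It then remains to observe that $kG$ is clean and that cleanness lifts across $I$. As $G$ is finite, $kG$ is a finite-dimensional $k$-algebra, hence Artinian and therefore semiperfect; every semiperfect ring is clean. Finally, idempotents and units both lift modulo a nil ideal, so from $RG/I$ clean one deduces $RG$ clean: lift the idempotent part of a decomposition of the image of $\alpha$ to an idempotent $e\in RG$, and note that $\alpha-e$ maps to a unit of $RG/I$ and is therefore itself a unit since $I$ is nil. Combined with the reduction, this proves $RG$ is clean. The hard part is the nilness of $J(R)\cdot RG$: this is exactly where commutativity of $R$ and the nil (as opposed to merely nilpotent) hypothesis on $J(R)$ interact, via the fact that a finitely generated nil ideal of a commutative ring is nilpotent together with the finiteness of supports; everything else is a standard lifting argument plus the cleanness of Artinian rings.
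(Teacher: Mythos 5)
Your argument is correct, but note that the paper offers no proof of this statement at all: it is quoted verbatim as Corollary~9 of \cite{Z10}, so there is nothing internal to compare against. Your write-up is a sound self-contained derivation along what is essentially the standard route to this result. The three ingredients all check out: (i) the reduction to finite $G$ via local finiteness is valid because a unit (resp.\ idempotent) of $RH$ remains one in $RG$; (ii) the nilness of $I=J(R)\cdot RG$ for finite $G$ is correctly reduced, via finiteness of supports and commutativity of $R$, to the fact that a finitely generated nil ideal of a commutative ring is nilpotent (a pigeonhole argument on the exponents of the generators); (iii) idempotents lift modulo a nil ideal, and an element invertible modulo $I\subseteq J(RG)$ is invertible, so cleanness passes from $RG/I\cong (R/J(R))G$ up to $RG$. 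The one place where you lean on an unnamed external fact is the claim that $kG$, being Artinian, is clean: this rests on the theorem that semiperfect rings are clean (Camillo--Yu), which in turn needs that semisimple Artinian rings are clean; it is standard but not free, and deserves an explicit citation rather than the phrase ``every semiperfect ring is clean'' in passing. With that reference supplied, your proof is complete and, as far as I can tell, reproduces the substance of Zhou's original argument.
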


\begin{teo}\label{teofields} Let $\mathbb{F}$ be a field and Let $G=Q_8\times A$, where $A$ is a finite abelian group and $*$ is its canonical involution as an SLC-group. Suppose also that  $\ch(\mathbb{F})$ does not divide $|G|$. Then  $\mathbb{F} G$ is $*$-clean if and only if the equation $X^2+Y^2+Z^2+1=0$ has no solution in $\mathbb{F}(\zeta_d)$, where $\zeta_d$ is a $d$-th primitive root of 1, for each  $d\in \BN$ such that $A$ has an element of order $d$.
\label{reciproca.corpos}
\end{teo}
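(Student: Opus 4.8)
The plan is to reduce the problem over the field $\mathbb{F}$ to a problem over a direct sum of fields via the Wedderburn decomposition of $\mathbb{F}A$, and then to invoke Theorem \ref{teo.ra.semissimples} together with Theorem A. First I would observe that since $\ch(\mathbb{F})\nmid|G|$, we have $\ch(\mathbb{F})\nmid|A|$, so by Maschke's Theorem $\mathbb{F}A$ is semisimple, say $\mathbb{F}A\simeq\oplus_{i=1}^k\mathbb{F}_i$ with each $\mathbb{F}_i$ a field. Since $A$ is abelian, standard representation theory identifies each simple component $\mathbb{F}_i$ with a field of the form $\mathbb{F}(\zeta_{d_i})$, where $d_i$ is the order of some element of $A$ (more precisely, the simple components of $\mathbb{F}A$ are indexed by the $\mathbb{F}$-conjugacy classes of characters, and each such component is $\mathbb{F}(\zeta_d)$ for a $d$ dividing the exponent of $A$, with $d$ realized as the order of an element). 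The key bookkeeping fact I would establish is that the fields $\mathbb{F}(\zeta_d)$ appearing as simple components are exactly those $d$ for which $A$ has an element of order $d$; this is where the precise structure of $\mathbb{F}A$ must be pinned down.

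For the forward direction, if $\mathbb{F} G$ is $*$-clean then by Theorem A (applied with $R=\mathbb{F}$) the equation $X^2+Y^2+Z^2+1=0$ has no solution in $\mathbb{F}$ itself. But I need the stronger statement that it has no solution in any $\mathbb{F}(\zeta_d)$. To get this, I would argue that $\mathbb{F}G\simeq(\mathbb{F}A)Q_8\simeq\oplus_{i=1}^k(\mathbb{F}_iQ_8)$, and that $*$-cleanness of $\mathbb{F}G$ forces $*$-cleanness of each summand $\mathbb{F}_iQ_8$ by Proposition \ref{slimpezasomadireta.inducao} (after checking that the involution $*$ respects this decomposition, i.e. that $*$ restricts to an involution on each $\mathbb{F}_iQ_8$). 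Then Theorem \ref{teocgl}, item 1, applied to each $\mathbb{F}_i=\mathbb{F}(\zeta_{d_i})$, yields that $X^2+Y^2+Z^2+1=0$ has no solution in any $\mathbb{F}(\zeta_{d_i})$, which is the desired conclusion.

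For the converse, assume the equation has no solution in any $\mathbb{F}(\zeta_d)$ with $d$ the order of an element of $A$. I would first use Theorem \ref{teoz} to get that $\mathbb{F}G$ is clean: $\mathbb{F}$ is a field, hence local with $J(\mathbb{F})=0$ (which is nil), and $G$ is finite hence locally finite. Then I would apply Theorem \ref{teo.ra.semissimples} with $R=\mathbb{F}$: the hypotheses $2\in\uni(\mathbb{F})$ (which follows from $\ch(\mathbb{F})\nmid|G|$ since $4\mid|Q_8|\mid|G|$), $|G|\in\uni(\mathbb{F})$, cleanness of $\mathbb{F}G$, and the no-solution hypothesis on every simple component of $\mathbb{F}A$ are all in place, giving $*$-cleanness of $\mathbb{F}G$.

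The main obstacle I expect is the identification of the simple components of $\mathbb{F}A$ precisely as the fields $\mathbb{F}(\zeta_d)$ ranging over the orders $d$ of elements of $A$, and in particular ensuring that the involution $*$ descends compatibly through the isomorphism $\mathbb{F}G\simeq\oplus_i(\mathbb{F}_iQ_8)$ so that Proposition \ref{slimpezasomadireta.inducao} applies. The clean decomposition is relatively routine, but matching the arithmetic condition ``$A$ has an element of order $d$'' to the exact list of cyclotomic fields $\mathbb{F}(\zeta_d)$ occurring in the decomposition — rather than, say, all $d$ dividing the exponent — requires care, since for a non-cyclic $A$ not every divisor of the exponent need be the order of an element, and conversely the Galois action of $\Gal(\mathbb{F}(\zeta_d)/\mathbb{F})$ can merge several characters into a single component $\mathbb{F}(\zeta_d)$.
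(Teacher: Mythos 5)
Your proposal is correct and follows essentially the same route as the paper: decompose $\mathbb{F}A$ into cyclotomic components, pass $*$-cleanness through $\mathbb{F}G\simeq(\mathbb{F}A)Q_8\simeq\oplus_d a_d(\mathbb{F}(\zeta_d)Q_8)$ via Proposition \ref{slimpezasomadireta.inducao} and Theorem \ref{teocgl} in one direction, and combine Theorem \ref{teoz} with Theorem \ref{teo.ra.semissimples} in the other. The ``key bookkeeping fact'' you flag as the main obstacle is exactly the Perlis--Walker Theorem (\cite[Theorem 3.5.4]{PS02}), which the paper cites to get that the simple components are precisely the $\mathbb{F}(\zeta_d)$ with $d$ ranging over the orders of elements of $A$.
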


\dem

First note that by Perlis-Walker Theorem (\cf \cite[Theorem 3.5.4]{PS02}])  we have
$$\mathbb{F} A\simeq \mathop{\oplus}\limits_{d} a_d \mathbb{F} (\zeta_d),$$
where $\zeta_d$ is a $d$-th primitive root of 1, with $n=|A|$ and $a_d\in \N$ for each $d\in\BN$ such that $A$ has an element of order $d$.

($\Setad$) Suppose $\mathbb{F}G$ is $*$-clean. Since $G$ is  finite and $J(\mathbb{F})=\che0\chd$, Theorem \ref{teoz} implies $\mathbb{F}G$ is clean.

Since $\mathbb{F} G\simeq \mathbb{F}(Q_8\times A)\simeq (\mathbb{F} A)Q_8$ and in $Q_8$ the classical involution coincide with the standard involution of $Q_8$, by Lemma \ref{prop.involucao1ver2} we have $(\mathbb{F} A)Q_8$ is $\circ$-clean, where $\circ$ is the classical involution of  $Q_8$.

So
$$(\mathbb{F} A)Q_8\simeq \oplus_{d|n}a_d(\mathbb{F} (\zeta_d)Q_8)$$
and by Proposition \ref{slimpezasomadireta.inducao}, each $\mathbb{F} (\zeta_d)Q_8$ is $\circ$-clean.

Applying Theorem \ref{teocgl} to each $\mathbb{F} (\zeta_d)Q_8$, we have that the equation $X^2+Y^2+Z^2+1=0$ has no solution in  $\mathbb{F} (\zeta_d)$.

($\Setae$) Since the simple components of $\mathbb{F} A$ are the fields  $\mathbb{F} (\zeta_d)$,
by applying Theorem \ref{teo.ra.semissimples} we conclude $\mathbb{F} G$ is $*$-clean.
\cqd

Now we can prove Theorem C.

\begin{proof}[Proof of Theorem C]
First note that $RG\simeq \oplus_{i=1}^n(\mathbb{F}_i G)$ and the restriction $*_i$ of $*$ to $\mathbb{F}_i G$ is an involution of $\mathbb{F}_i G$. By Proposition \ref{slimpezasomadireta.inducao} $RG$ is $*$-clean if and only if each $\mathbb{F}_i G$ is $*_i$-clean. The result follows from applying Theorem \ref{teofields} to each $\mathbb{F}_i G$.
\end{proof}

If we consider rational coefficients, we may use quadratic forms theory to achieve some extra conclusions. We recall that the {\it level} $s(\mathbb{F})$ of a field $\mathbb{F}$ is the smallest natural number $n$ such that $-1$ is a sum of $n$ squares in $F$. If $-1$ is not a sum of squares, then $s(\mathbb{F}) = \infty$. By the Pfister Level Theorem, $s(\mathbb{F})$ is always $\infty$ or a power of 2 (\cf \cite[p. 379]{L05}). For the specific case of  $\mathbb{Q}(\zeta_p)$, where $p$ is an odd prime and $\zeta_p$ is a $p$-th primitive root of 1, we have

$$
    s(\mathbb{Q}(\zeta_p))=\left\{ \begin{array}{cl}
    2, &\text{ if }p\equiv 3,5 \mod 8\\
    4, &\text{ if }p\equiv 7 \mod 8\\
    2 \text{ or } 4, &\text{ if } p\equiv 1 \mod 8.
    \end{array}\right.
$$

\begin{proof}[Proof of Corollary A]
\begin{enumerate}
    \item
    Since  $s(\mathbb{Q}(\zeta_p))=2$ then the equation $X^2+Y^2=-1$ has a solution in $\mathbb{Q}(\zeta_p)$ hence, by making $Z=0$, $\BQ G$ is not $*$-clean by Theorem A.

    \item
    Since $s(\mathbb{Q}(\zeta_p))=4$, then the equation $X^2+Y^2+Z^2+1=0$ has no solutions in $\mathbb{Q}(\zeta_p)$. Since every element of $C_p$ has order $p$, the result follows from Theorem \ref{teofields}.
\end{enumerate}
\end{proof}

We finish with an interesting application of Corollary A. The following result may be proven by using the law of quadratic reciprocity, but we give a different proof.

\begin{cor}
    Let $p$ be a prime number. If $p\equiv 7 \mod 8$ then there is no $n\in \BN$ such that $p$ divides $2^{n}+1$.
\end{cor}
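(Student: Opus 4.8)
The plan is to deduce this purely arithmetic statement from Corollary A and Theorem A, avoiding quadratic reciprocity entirely. The key observation is that Theorem A, item 2, is itself a statement about divisors of $2^n+1$, so the whole content of the corollary has already been packed into the $*$-cleanness machinery; all that remains is to feed the right group into it.

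First I would fix a prime $p\equiv 7\mod 8$ and consider the SLC-group $G=Q_8\times C_p$ equipped with its canonical involution $*$. This is legitimate because $Q_8$ is an SLC-group and the SLC property is closed under direct product with abelian groups, so $G$ is again an SLC-group; moreover $\BQ$ is a unital commutative ring with $2\in\uni(\BQ)$, so both Corollary A and Theorem A apply to $\BQ G$. By Corollary A, item 2, the group ring $\BQ G$ is $*$-clean.

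Next I would apply Theorem A to this $*$-clean group ring, writing $G=Q_8\times A$ with $A=C_p$. Item 2 of Theorem A then asserts that, for every prime $q$, if there exists $n\in\BN$ with $q\mid 2^n+1$, then $A$ has no element of order $q$. Specializing to $q=p$: were there some $n\in\BN$ with $p\mid 2^n+1$, the group $C_p$ would have no element of order $p$. This is absurd, since every nontrivial element of $C_p$ has order exactly $p$. Hence no such $n$ exists, which is precisely the claim.

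The hard part is essentially nonexistent: the entire burden has been shifted onto Theorem A (whose item 2 rests on the technical Lemma \ref{propexmau}) and onto the converse direction in Corollary A (which in turn relies on the level computation $s(\BQ(\zeta_p))=4$ for $p\equiv 7\mod 8$). The only step requiring any care is the routine verification that $Q_8\times C_p$ satisfies the hypotheses of both theorems, namely that it is an SLC-group and that the coefficient ring $\BQ$ has $2$ invertible; once this is in place the contradiction is immediate.
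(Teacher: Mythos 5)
Your proof is correct and follows essentially the same route as the paper: take $G=Q_8\times C_p$, invoke Corollary A(2) to get that $\BQ G$ is $*$-clean, and then contradict Theorem A(2). The paper's own proof is just a terser version of this (it even has a small typo, writing $G=A\times C_p$ where $Q_8\times C_p$ is meant), and your more explicit verification of the hypotheses is a faithful expansion of it.
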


\begin{proof}
    Suppose there is such $n$ and let $G=A\times C_p$. By Corollary A, $\BQ G$ is $*$-clean, but that contradicts Theorem A.
\end{proof}

{\bfseries Acknowledgments}: This research was partially supported by FINAPESQ-UEFS, Brazil.

\end{document}